\documentclass{amsart}
\usepackage{mathptmx, bbm, amscd, amssymb, enumerate, colonequals, mathdots, xcolor, comment, graphicx, lineno, xspace, amsmath, mathtools}
\usepackage{color}
\usepackage{tikz-cd}
\definecolor{chianti}{rgb}{0.6,0,0}
\definecolor{meretale}{rgb}{0,0,.6}
\definecolor{leaf}{rgb}{0,.35,0}
\usepackage[colorlinks=true, hyperindex, citecolor=meretale, urlcolor=leaf, linkcolor=chianti]{hyperref}
\usepackage[all]{xy}
\usepackage{paralist}
\usepackage{mathrsfs}
\usepackage{parskip}
\usepackage{etoolbox}
\AtBeginEnvironment{proof}{\vspace*{-1.5em}}
\usepackage{enumitem}
\usepackage{ytableau}

\newtheorem{theorem}{Theorem}[section]
\newtheorem{lemma}[theorem]{Lemma}
\newtheorem{corollary}[theorem]{Corollary}
\newtheorem{prop}[theorem]{Proposition}
\newtheorem*{maintheorem}{Main Theorem}

\theoremstyle{definition}

\numberwithin{equation}{theorem}
\newtheorem{examplex}[theorem]{Example}
\newenvironment{example}{\pushQED{\qed}\examplex}{\popQED\endexamplex}
\newtheorem{remarkx}[theorem]{Remark}
\newenvironment{remark}{\pushQED{\qed}\remarkx}{\popQED\endremarkx}

\usepackage[backend=biber,style=alphabetic,doi=false,isbn=false,url=false,eprint=true,maxbibnames=10,minbibnames=3,mincitenames=3,maxcitenames=4,maxalphanames=4,minalphanames=3,backref=true]{biblatex}
\DeclareLabelalphaTemplate{%
  \labelelement{%
    \field[final]{shorthand}
    \field{label}
    \field[strwidth=3,strside=left,ifnames=1]{labelname}
    \field[strwidth=1,strside=left]{labelname}
  }
}
\DefineBibliographyStrings{english}{%
  backrefpage={},
  backrefpages={}
}

\usepackage{xpatch}
\DeclareFieldFormat{backrefparens}{\addperiod#1}
\DeclareFieldFormat{postnote}{#1}   
\xpatchbibmacro{pageref}{parens}{backrefparens}{}{}

\DeclareMathOperator{\ara}{ara}
\DeclareMathOperator{\cp}{cptd}
\DeclareMathOperator{\ecd}{\acute{e}cd}
\DeclareMathOperator{\lcd}{lcd}
\DeclareMathOperator{\rad}{rad}
\DeclareMathOperator{\init}{in}
\DeclareMathOperator{\trdeg}{trdeg}
\DeclareMathOperator{\Frac}{Frac}
\DeclareMathOperator{\chr}{char}

\DeclareMathOperator{\rank}{rank}
\DeclareMathOperator{\htt}{ht}
\DeclareMathOperator{\tr}{tr}
\DeclareMathOperator{\Gr}{Gr}

\DeclareMathOperator{\GL}{GL}
\DeclareMathOperator{\SL}{SL}
\DeclareMathOperator{\Sp}{Sp}
\DeclareMathOperator{\OO}{O}  
\DeclareMathOperator{\HH}{H}
\newcommand{\Hc}{\HH_{\text{c}}}

\newcommand{\fraka}{\mathfrak{a}}
\newcommand{\frakb}{\mathfrak{b}}
\newcommand{\frakm}{\mathfrak{m}}
\newcommand{\frakp}{\mathfrak{p}}
\newcommand{\frakU}{\mathfrak{U}}

\renewcommand{\AA}{\mathbb{A}}
\newcommand{\CC}{\mathbb{C}}
\newcommand{\NN}{\mathbb{N}}
\newcommand{\VV}{\mathbb{V}}
\newcommand{\ZZ}{\mathbb{Z}}

\renewcommand{\ge}{\geqslant}
\renewcommand{\le}{\leqslant}
\renewcommand{\subset}{\subseteq}

\renewcommand{\to}{\longrightarrow}
\renewcommand{\mapsto}{\longmapsto}
\newcommand{\into}{\lhook\joinrel\longrightarrow}
\newcommand{\minor}[2]{\IfBlankTF{#2}{[\; #1 \;]}{[\; #1 \mid #2 \;]}}
\newcommand{\md}[1]{{\left\lvert #1 \right\lvert}}
\newcommand{\andd}{\quad\text{and}\quad}
\newcommand{\smatrix}[1]{\left[\begin{smallmatrix} #1 \end{smallmatrix}\right]}
\newcommand{\ph}{\phantom{2}}

\newcommand{\boldone}{\mathbbm{1}}
\newcommand{\et}{\text{\'et}}
\newcommand{\mapsfrom}{\mathrel{\reflectbox{\ensuremath{\mapsto}}}}

\newcommand{\etale}{\'etale}
\newcommand{\Poincare}{Poincar\'e}

\NewDocumentCommand{\JPSW}{o}{\IfNoValueTF{#1}{\cite{JPSW}}{\cite[#1]{JPSW}}}
\NewDocumentCommand{\BMMP}{o}{\IfNoValueTF{#1}{\cite{BMMP}}{\cite[#1]{BMMP}}}

\begin{document}
\title[Arithmetic rank of classical nullcones]{The arithmetic rank of \\ nullcones of classical invariant rings}

\author{Manav Batavia}
\address{Department of Mathematics, Purdue University, West Lafayette, IN~47907, USA}
\email{mbatavia@purdue.edu}

\author{Aryaman Maithani}
\address{Department of Mathematics, University of Utah, Salt Lake City, UT~84112, USA}
\email{maithani@math.utah.edu}

\author{Kesavan Mohana Sundaram}
\address{Department of Mathematics, University of Nebraska-Lincoln, NE~68588, USA}
\email{km2@huskers.unl.edu}

\begin{abstract}
  We compute the arithmetic rank of nullcone ideals arising from natural actions of the special linear, orthogonal, and symplectic groups on direct sums of copies of their standard and dual representations. Over an infinite field of characteristic different from two, we show that the arithmetic rank equals the Krull dimension of the invariant ring, and compute this dimension. In positive characteristic, the required lower bounds are not detected by local cohomology, and are obtained using \'etale cohomology. 
\end{abstract}
\subjclass[2020]{Primary 13A50; Secondary 13C40, 13D45, 14F20}
\keywords{arithmetic rank, local cohomology, \'etale cohomology, invariant theory}
\maketitle

\section{Introduction}

  Let $K$ be a field, and consider a group $G$ acting on an $\NN$-graded $K$-algebra $S$ via a degree-preserving $K$-algebra automorphism. 
  The \emph{nullcone ideal} of this action is the ideal $\frakU \colonequals \frakm S$, where $\frakm$ is the homogeneous maximal ideal of the invariant ring $S^{G}$. 
  In other words, $\frakU$ is the ideal generated by homogeneous invariant polynomials of positive degree. 
  The \emph{arithmetic rank} of an ideal $\fraka \subseteq S$ is defined to be the least number of elements required to generate $\fraka$ up to radical, i.e.,
  \begin{equation*} 
    \ara \fraka \colonequals
    \min\{ k \ge 0 : \text{there exist $f_{1}, \cdots, f_{k} \in S$ such that $\rad(f_{1}, \ldots, f_{k})S = \rad \fraka$} \}.
  \end{equation*}

  In this paper, we compute the arithmetic rank of nullcone ideals arising in classical invariant theory. 
  Assume now that $K$ is an infinite field of characteristic other than two, and let $X$ and $Y$ be matrices of indeterminates of sizes $m \times t$ and $t \times n$ respectively. The group $\GL_{t}(K)$ acts on the polynomial ring $S \colonequals K[X_{m \times t}, Y_{t \times n}]$, 
  where $M \in \GL_{t}(K)$ acts via
  \begin{equation*} 
    M \colon 
    \begin{cases}
      X \mapsto X M^{-1}, \\
      Y \mapsto M Y.
    \end{cases}
  \end{equation*}
  In other words, we have $n$ copies of the regular representation of $\GL_{t}(K)$, along with $m$ copies of the dual representation. 
  In turn, this action restricts to an action of any subgroup $G$ of $\GL_{t}(K)$. 
  The four classical groups are 
  the general linear group $\GL_{t}(K)$,
  the special linear group $\SL_{t}(K)$, 
  the orthogonal group
  \begin{equation*} 
    \OO_{t}(K) \colonequals \{M \in \GL_{t}(K) : M^{\tr} M = \boldone_{t}\},
  \end{equation*}
  and
  the symplectic group
  \begin{equation*} 
    \Sp_{2t}(K) \coloneqq \{M \in \GL_{2t}(K) : M^{\tr} \Omega M = \Omega\},
  \end{equation*}
  where $\Omega \colonequals \smatrix{0 & \boldone_{t} \\ -\boldone_{t} & 0}$. 
  The rings of invariants were studied by Weyl \cite{WeylClassical} when $K = \CC$,
  and later by De Concini and Procesi \cite{DeConciniProcesi76} who extended the descriptions of the rings of invariants to infinite fields of positive characteristic.

    Special cases of these classical nullcone ideals and their arithmetic rank computations have appeared in a variety of guises in the literature. They can be summarised as follows. 
    \begin{enumerate}[leftmargin=*, align=left, label=(\alph*)]
        \item If $G=\GL_t(K)$, the nullcone ideal $\frakU$ is generated by the entries of $XY$. The minimal primes of $\frakU$ are the ideals
        \begin{equation} \label{eq:variety-complexes}
          \frakp_{i, j} \colonequals I_{i+1}(X) + I_{1}(XY) + I_{j+1}(Y)
        \end{equation}
        with $i+j=t$. These ideals define the varieties of complexes introduced in \cite{Buchsbaum-Eisenbud}. The properties of $\frakp_{i,j}$ and $\frakU$ have been studied extensively \cite{Kempf,HunekeVoC,ConciniStricklandVoC,MehtaTrivedi,PTW}. The arithmetic rank of the nullcone ideal $\frakU$ was computed in \cite[Theorem~4.1]{JPSW}.
        \item If $G=\SL_t(K)$ and $m=0$, i.e., $S=K[Y]$, then the nullcone ideal $\frakU$ is generated by the $t$-minors of $Y$, placing it within the vast classical theory of determinantal ideals \cite{Macaulay1916,HochsterEagon,BrunsVetter,Miro-Roig}. Its arithmetic rank was computed in \cite{BrunsSchwanzl}. 
        \item If $G=SL_t(K)$, $m=1$, and $n\geq t$, then the nullcone ideal $\frakU$ is $I_{1}(X Y) + I_{t}(Y)$.
        It arises naturally in linkage theory as the \emph{generic $n$-residual intersection ideal} of an ideal of variables \cite{HunekeUlrichGenericRI}.
        From a different perspective, $\frakU$ is the ideal $\frakp_{1,\;t-1}$ arising from a variety of complexes as in~\eqref{eq:variety-complexes}. Its arithmetic rank was computed in \cite[Theorem~4.7]{BMMP}.
        \item If $G=\OO_t(K)$ or $G=\Sp_{2t}(K)$, and $m=0$, the corresponding nullcone ideals have been investigated in \cite{KraftSchwarz,HJPS,PTW}. Their arithmetic ranks were computed in \cite[Theorems~5.1,~3.1]{JPSW}.
    \end{enumerate}
    
    We complete these computations by determining the arithmetic rank of the nullcone ideal in all the remaining cases.

    \begin{maintheorem}[Theorems~\ref{thm:ara-SL},~\ref{thm:ara-O}, and~\ref{thm:ara-Sp}]
      Let $K$ be an infinite field of characteristic not two, and
      let $G \subseteq \GL_{t}(K)$ be one of $\SL_{t}(K)$, $\OO_{t}(K)$, or $\Sp_{t}(K)$, where in the last case we assume that $t$ is even.
      Consider the aforementioned natural action of $G$ on the polynomial ring $S \colonequals K[X_{m \times t}, Y_{t \times n}]$, 
      where we assume that $t \le \max(m, n)$.
      The arithmetic rank of the nullcone ideal $\frakU \subset S$ satisfies
      \begin{equation*}
          \ara \frakU = \dim S^G.
      \end{equation*}
      Moreover, the above dimension is 
      \begin{equation*} 
         \dim S^G = \dim S - \dim G.
      \end{equation*}
    \end{maintheorem}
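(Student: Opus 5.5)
Upper bound $\ara \frakU \le \dim S^G$: since $S^G$ is a finitely generated graded $K$-algebra with $K$ infinite, graded Noether normalization gives a homogeneous system of parameters $f_1,\dots,f_d$ of $S^G$, $d=\dim S^G$. Then $\rad(f_1,\dots,f_d)S^G=\frakm$, so $\rad(f_1,\dots,f_d)S=\rad(\frakm S)=\rad\frakU$. Hence $\ara\frakU\le \dim S^G$ in every case. I would record this first as a general lemma.

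Dimension: I would use the first fundamental theorems of De Concini--Procesi to present $S^G$ explicitly. For $\SL_t$ it is generated by the entries of $XY$, the $t$-minors of $X$ and the $t$-minors of $Y$. For $\OO_t$ it is generated by the entries of $XX^{\tr}$, $XY$, $Y^{\tr}Y$, i.e.\ $K[Z Z^{\tr}]$ with $Z=[X \mid Y^{\tr}]$ of size $(m+n)\times t$, a symmetric determinantal ring of rank $\le t$. For $\Sp_{t}$ it is $K[Z\Omega Z^{\tr}]$, an alternating determinantal ring. In the last two cases the dimensions are known (symmetric rank $\le t$ matrices of size $N=m+n$: $Nt-\binom{t}{2}$; alternating rank $\le t$: $Nt-\binom{t+1}{2}$), which match $\dim S-\dim G$ since $\dim\OO_t=\binom t2$, $\dim\Sp_t=\binom{t+1}{2}$, when $t\le N$. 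For $\SL_t$, the hypothesis $t\le\max(m,n)$ makes a generic point of $\Spec S$ have trivial stabilizer with closed orbit dimension $t^2-1$; I would argue via the quotient map $S^{\GL_t}\subset S^{\SL_t}$ being finite over the $\GL$-invariants adjoined with one $t$-minor (or directly: generic fibre of $\Spec S\to\Spec S^G$ is a single free orbit), giving $mt+nt-(t^2-1)$. Generally: $\dim S^G=\dim S-\max$ generic orbit dimension, and one checks generic orbits are closed and free-ish under the rank hypothesis.

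Lower bound $\ara\frakU\ge\dim S^G$: this is the main obstacle. The standard approach (as in \JPSW) is to find a cohomological index $c$ with $H^c_{\frakU}(S)\ne0$, $c=\dim S^G$ in characteristic zero; the paper warns that in positive characteristic local cohomology fails, so I would instead use étale cohomology: if $\frakU$ is generated up to radical by $k$ elements then $U=\AA^{N}\setminus V(\frakU)$ is covered by $k$ affines, so $H^i_{\et}(U,\ZZ/\ell)=0$ for $i\ge N+k$ (over $\bar K$, with compact support dually). So I would compute the étale cohomology of the complement of the nullcone, or dually of the nullcone $V(\frakU)$ itself with compact supports, and exhibit a nonzero class in degree $\dim S+\dim S^G-1$ of $U$. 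Strategy: stratify/fibre $U$ via the $G$-action — e.g.\ over $\bar K$ use a Gysin/long exact sequence with the open stratum where the "generic" part (full rank) lives, which is a $G$-torsor over an open in $\Spec S^G$ minus origin, and use Künneth-type computations with $H^{\mathrm{top}}$ of $G$ (e.g.\ $\SL_t$, $\mathrm{SO}_t$, $\Sp_t$ have top $\ell$-adic cohomology in degree $2\dim G-\mathrm{rank}$) together with the punctured cone $\Spec S^G\setminus\{\frakm\}$ contributing degree $2\dim S^G-1$. Reduction to the cases $t\le n$ versus $t\le m$ by symmetry $X\leftrightarrow Y^{\tr}$, and reduction to a minimal case by specialization of variables (ara can only drop under killing variables that meet the nullcone nicely), is how I would organize it; the nonvanishing of the specific top class is the delicate step.
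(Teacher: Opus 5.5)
Your upper bound is the paper's: a homogeneous system of parameters of $S^G$ generates $\frakU$ up to radical.

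Your dimension counts are also fine. For $\OO_t$ and $\Sp_t$, the symmetric and alternating determinantal dimensions equal $\dim S-\dim G$. For $\SL_t$, your generic-orbit argument is a valid and shorter alternative to the paper's transcendence-degree computation, which instead exhibits $d$ generators of $\Frac S^G$ and an algebraically independent chain in the appendix. The argument works because if $\rank A=t$, then $\Delta_t(A)$ determines $A$ up to $\SL_t$ and $AB$ then determines $B$, so generic fibres of $\operatorname{Spec}S\to\operatorname{Spec}S^G$ are free orbits. Your parenthetical variant, finiteness over $K[XY]$ with one $t$-minor $\delta$ adjoined, is false in general. For example, if $\delta\in\Delta_t(X)$ and $n\ge t$, then $XY$ and $\delta$ vanish at $(0,B)$ with $\rank B=t$, but $\Delta_t(Y)$ does not.

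\textbf{The gap.} The genuine gap is the lower bound $\ara\frakU\ge\dim S^G$ in positive characteristic. That is the whole content of the theorem there, and you leave it open. You identify the right target, $\ecd(U)\ge\dim S+\dim S^G-1$, equivalently $\cp(U)\le\dim G+1$, but the torsor picture does not deliver it, for three reasons.

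First, the preimage $U$ of the punctured spectrum of $S^G$ is not a $G$-torsor. Take $G=\SL_t$ with $t\ge 3$, and a pair $(A,B)$ where $A$ has a single nonzero row $a$ and $B$ a single nonzero column $b$ with $ab\neq 0$. This pair lies in $U$ and is fixed by $\boldone+uv^{\tr}\in\SL_t$ for any nonzero $u,v$ with $au=0$ and $v^{\tr}b=v^{\tr}u=0$. So the free locus is only an open piece of $U$. Compact dimension does not pass from an open subset to $U$ without controlling $\Hc^{*}$ of the complement, and that control is exactly the hard stratification work.

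Second, there is no K\"unneth formula for a nontrivial torsor over an open subset of a singular cone; compare the hypotheses of Lemma~\ref{lem:cpdim-fiber}.

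Third, the degree you attach to $G$ is wrong. $G$ is affine, so $\HH^{i}(G)=0$ for $i>\dim G$, whereas for instance $2\dim\SL_2-\rank\SL_2=5>3$. Only $\ecd G=\dim G$ makes your count add up to $\dim S+\dim S^G-1$.

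\textbf{What is missing for $\OO_t$ and $\Sp_t$.} You wrote down the right matrix $Z$ but used it only for the dimension. Since $(M^{-1})^{\tr}=M$ on $\OO_t$ and $(M^{-1})^{\tr}=\Omega M\Omega^{-1}$ on $\Sp_t$, the substitution $Z\mapsto\begin{bmatrix}X^{\tr}&Y\end{bmatrix}$ (resp.\ $\begin{bmatrix}\Omega X^{\tr}&Y\end{bmatrix}$) is a $G$-equivariant graded isomorphism $K[Z_{t\times(m+n)}]\cong S$. It therefore carries the nullcone of $m+n$ copies of the standard representation onto $\frakU$. The arithmetic rank is then already \JPSW[Theorems~5.1 and~3.1], and no new cohomology is needed.

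\textbf{What is missing for $\SL_t$.} No such shortcut exists here. The paper proves $\cp(U)=t^2$, equivalently $\cp\VV(\frakU)=t^2-1$, by cases, assuming $n\le m$:
\begin{itemize}
\item For $t\le n$, it uses $\VV(XY)=\VV(\frakU)\sqcup(U_1\sqcup U_2)$. Here $\VV(XY)$ and the two full-rank strata $U_1$, $U_2$ all have compact dimension $t^2$ with rank-one critical groups by JPSW.
\item For $n<t<m$, it uses Mayer--Vietoris on $U=\VV(I_t(X))^{c}\cup\VV(I_1(XY))^{c}$.
\item For $n=t-1$, $m=t$, it stratifies $\VV(\frakU)$ by $\rank A$ using the bundle of Proposition~\ref{prop:cpd-Vr}. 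The compact dimensions $r^2+2(t-r)(t-1)$ strictly decrease in $r$, so Lemma~\ref{lem:cpd-locally-closed-union} applies.
\end{itemize}

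For $n+1<t=m$, the analogous numbers $r^2+2(t-r)n$ are not monotone, and the stratification argument breaks down. The paper instead pushes the bound down from $n=t-1$ using Lemma~\ref{lem:one-more-generator}. Write $\frakU_n$ for the nullcone when $Y$ has size $t\times n$. Adding a column to $Y$ adds $m$ generators and raises $\dim S^G=mn+1$ by $m$, which gives $\ara\frakU_n\ge\ara\frakU_{n+1}-m$.

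Your suggested reduction by specializing variables runs the other way and is too weak. It gives only $\ara\frakU_n\ge\ara\frakU_{n-1}$, losing exactly the $m$ that is needed.
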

    We recall that the dimensions of the algebraic groups of interest are
    \begin{equation*}
      \dim \SL_t(K) = t^2-1, 
      \quad 
      \dim \OO_t(K) = \binom{t}{2}, 
      \quad
      \text{and}
      \quad
      \dim \Sp_t(K) = \binom{t+1}{2}.
    \end{equation*}

  We show that the actions of the orthogonal and symplectic groups on $K[X_{m \times t}, Y_{t \times n}]$ can be reduced to actions on $K[Y'_{t \times (m + n)}]$, i.e., only copies of the regular representation; 
  this reduction allows us to use the existing descriptions of the nullcone ideal and the arithmetic rank \cite{DeConciniProcesi76,JPSW}. 
  In view of this, most of the paper is devoted to studying 
  \begin{equation*}
      K[X,\, Y]^{\SL_{t}(K)} \;=\; K[X Y,\, \Delta_{t}(X),\, \Delta_{t}(Y)],
  \end{equation*} 
  where $\Delta_{t}(X)$ is the set of $t$-sized minors of $X$;
  the corresponding nullcone ideal is given as $I_{1}(X Y) + I_{t}(X) + I_{t}(Y)$. 

  When $K$ is a field of characteristic zero, the arithmetic rank of the nullcone ideals is readily seen to equal the Krull dimension $d\colonequals\dim(S^{G})$; we sketch an argument that may be found in \cite[p.~117]{Bruns:AdditionASL} or \JPSW[Theorem~1.1]. 
  The homogeneous maximal ideal $\frakm$ of $S^{G}$ can be generated up to radical by $d$ elements in $S^{G}$, a fortiori in $S$, giving us the upper bound. 
  On the other hand, the inclusion $S^{G} \into S$ splits in characteristic zero as $G$ is linearly reductive. 
  In turn, we obtain an injection
  \begin{equation} \label{eq:local-cohomology-injection}
    0 \neq \HH_{\frakm}^{d}(S^{G}) \into \HH_{\frakm S}^{d}(S)
  \end{equation}
  of local cohomology modules, giving us the lower bound; see \cite[Proposition~9.12]{24hours}. 
  However, in positive characteristic, it is known that the inclusion $S^{G} \into S$ is almost never split for classical invariant rings; see \cite{HJPS}. 
  In fact, we show that the local cohomology obstruction vanishes (Theorem~\ref{thm:small-lcd}), and the lower bound on arithmetic rank is instead obtained using \etale\ cohomology.

  In the spirit of the above argument, we first compute the Krull dimension of $S^{G}$, giving us the arithmetic rank in characteristic zero and an upper bound in positive characteristic. 
  We establish the nonvanishing of the critical \etale\ cohomology module to obtain the desired lower bound and show that the equality $\ara(\frakm S) = \dim(S^{G})$ holds even in odd positive characteristic. If $t>\min(m,n)$, we also obtain minimal set-theoretic generators of the nullcone ideal.
    
  The relevant facts and machinery regarding \etale\ cohomology are reviewed in Section~\ref{sec:etale}. In Section~\ref{sec:calculations}, we perform some preliminary calculations to be used in our main results. In Section~\ref{sec:SL}, we compute the arithmetic rank of the nullcone ideal for the $\SL$-action. Finally, Section~\ref{sec:OSp} establishes the Main Theorem for the orthogonal and symplectic group actions. Appendix~\ref{sec:Appendix} gives an elementary and self-contained proof of the lower bound on the dimension of the invariant ring used in Theorem~\ref{thm:dim-aux-ring}.

    \textbf{Acknowledgements.}
  We are grateful to Benjamin Briggs, Jack Jeffries, Vaibhav Pandey, Lisa Seccia, Anurag K.~Singh, and Uli Walther for several valuable discussions. M.B. was supported by NSF Grants DMS-2302430 and DMS-2100288, and by Simons Foundation Grant SFI-MPS-TSM-00012928. A.M. was supported by NSF grant DMS-2349623 and a Simons Dissertation Fellowship. K.M.S. was supported by NSF grants DMS-2236983, DMS-2601940, and DMS-2044833.

    \textbf{AI disclosure.}
    We used freely available ChatGPT models to assist with literature searches and with formalising certain ideas in the proof of Proposition~\ref{prop:cpd-Vr}. We wrote the manuscript in its entirety, and take full responsibility for its content and correctness.

\section{\'Etale cohomology} \label{sec:etale}

  In this section, we collect facts about \etale\ cohomology that we shall use in our computations later. 
  We work over an algebraically closed field $K$ of characteristic not two and 
  all varieties will be assumed to be quasiprojective.
  The following material may be found in \JPSW[\S6.3] or \cite{Milne}. 
  
  For a quasiprojective variety $V$ over $K$, we set
  \begin{align*} 
    \Hc^{i}(V) \colonequals \HH_{\text{c}, \et}^{i}(V; \ZZ/2) 
    \andd
    \HH^{i}(V) \colonequals \HH_{\et}^{i}(V; \ZZ/2),
  \end{align*}
  where the subscript $\text{c}$ denotes \etale\ cohomology with compact support. 
  The \emph{compact dimension} and \emph{\etale\ cohomological dimension} of $V$, respectively, are
  \begin{align*} 
    \cp(V) \colonequals \inf\{i \ge 0 : \Hc^{i}(V) \neq 0\} 
    \andd
    \ecd(V) \colonequals \sup\{i \ge 0 : \HH^{i}(V) \neq 0\}.
  \end{align*}
  The \emph{critical cohomology group} of $V$ is $\Hc^{\cp V}(V)$. 

  \emph{Long exact sequence of a subspace.} \cite[p.~94]{Milne}
  If $V = W \sqcup U$ with $U$ open in $V$, then there is a long exact sequence 
  \begin{equation} \label{eq:les-triple}
    \Hc^{i}(U) \to \Hc^{i}(V) \to \Hc^{i}(W) \to \Hc^{i+1}(U) \to.
  \end{equation}

  \emph{Mayer--Vietoris.} \cite[III.2.24]{Milne}
  If $V = V_{1} \cup V_{2}$ with $V_{i}$ open, then there is a long exact sequence
  \begin{equation} \label{eq:les-mv}
    \HH^{i}(V) \to
    \HH^{i}(V_{1}) \oplus \HH^{i}(V_{2}) \to
    \HH^{i}(V_{1} \cap V_{2}) \to
    \HH^{i+1}(V) \to.
  \end{equation}
  The same holds for \etale\ cohomology groups with compact supports as well, \JPSW[p.~26]. 

  \emph{Affine vanishing.} 
    \JPSW[p.~26]
    If $V$ is an affine variety of dimension $d$, then \cite[VI.7.2]{Milne} states
    \begin{equation*}
        \HH^{i}(V) = 0 \quad
        \text{for $i > d$}.
    \end{equation*}
  In turn, if $\frakU$ is an ideal in a polynomial ring $S$ 
  and $U \colonequals \VV(\frakU)^{c}$ the open complement of its variety, then repeated application of the Mayer--Vietoris sequence yields
  \begin{equation} \label{eq:affine-vanishing}
    \ecd(U) \le \ara(\frakU) + \dim(S) - 1,
  \end{equation}
    after noting that $\ara(\frakU)$ is the minimum number of affine open sets needed to cover $U$.
  This will be our key tool in obtaining lower bounds for arithmetic rank. 

  \emph{\Poincare\ duality.} \cite[VI.11.1]{Milne} 
  If $V$ is a smooth quasiprojective variety, then there is an isomorphism $\HH^{i}(V) \cong \Hc^{2 \dim(V) - i}(V)$ for every $i$. 
  In turn, we obtain the equality
  \begin{equation} \label{eq:PD}
    \ecd(V) = 2 \dim(V) - \cp(V).
  \end{equation}

  \begin{lemma}[{\JPSW[Lemma~6.1]}] \label{lem:cpdim-fiber}
    Let $F$, $E$, $B$ be quasiprojective varieties. 
    Assume that:
    \begin{enumerate}[label=(\arabic*), leftmargin=1cm]
      \item $F \to E \to B$ is a locally trivial fiber bundle;
      \item the critical cohomology group of the fiber $F$ has rank one;
      \item one of the following holds:
      \begin{enumerate}[label=(\alph*)]
        \item the base $B$ is simply connected with critical cohomology group of rank one, or
        \item $B$ is smooth of dimension $b$ as an algebraic variety, is covered by $k$ affines where $\cp B = b-k+1$, and has critical cohomology group of rank one.
      \end{enumerate}
    \end{enumerate}
    Then $\cp E = \cp F + \cp B$, and the critical cohomology group of $E$ has rank one. \qed
  \end{lemma}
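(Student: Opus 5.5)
The plan is to compute compactly supported cohomology of $E$ through the Leray spectral sequence of the bundle $\pi\colon E \to B$ with compact supports, as in \JPSW[Lemma~6.1]:
\begin{equation*}
  E_{2}^{p,q} = \Hc^{p}\bigl(B; R^{q}\pi_{!}\,\ZZ/2\bigr) \Longrightarrow \Hc^{p+q}(E).
\end{equation*}
Write $f \colonequals \cp F$ and $c \colonequals \cp B$. Because the bundle is locally trivial, $R^{q}\pi_{!}\ZZ/2$ is a locally constant sheaf with stalk $\Hc^{q}(F)$. It therefore vanishes for $q<f$, and for $q=f$ it is a local system $\mathcal{L}$ of rank one over $\ZZ/2$ by hypothesis~(2).

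Next I would bound the spectral sequence from below. We need $E_{2}^{p,q}=0$ whenever $p+q < c+f$. For $q<f$ this holds because the sheaf vanishes. For $q \ge f$ and $p < c$, I would prove that $\Hc^{p}(B;\mathcal{G}) = 0$ for every locally constant constructible sheaf $\mathcal{G}$ of $\ZZ/2$-modules. This is a dévissage: such a sheaf has a finite filtration by rank-one local systems, and over $\ZZ/2$ a rank-one local system is given by a character $\pi_{1}(B)\to \{\pm 1\}$, i.e.\ by a double cover.

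Granting these vanishings, the corner term $E_{2}^{c,f}=\Hc^{c}(B;\mathcal{L})$ admits no nonzero differentials in or out. Any incoming differential comes from $E_{r}^{c-r,f+r-1}$, which sits in total degree $c+f-1$ and so vanishes; any outgoing differential lands in $E_{r}^{c+r,f-r+1}$ with $q<f$, which also vanishes. Hence $\Hc^{i}(E)=0$ for $i<c+f$ and $\Hc^{c+f}(E)\cong \Hc^{c}(B;\mathcal{L})$. It then remains to show this group has rank one.

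The main obstacle is controlling the twisted coefficients, and the two alternatives in hypothesis~(3) handle it differently.
\begin{itemize}
  \item In case~(a), $B$ is simply connected, so every locally constant sheaf is constant. Then $\mathcal{L}\cong\ZZ/2$ and $R^{q}\pi_{!}\ZZ/2 \cong \Hc^{q}(F)\otimes \ZZ/2$. The vanishing for $p<c$ follows directly from the definition of $c=\cp B$, and $\Hc^{c}(B;\mathcal{L})=\Hc^{c}(B)$ has rank one.
  \item In case~(b) the local systems may be nontrivial, so I would instead use Poincaré duality~\eqref{eq:PD} together with affine vanishing. Since $B$ is smooth of dimension $b$, duality with a locally constant coefficient $\mathcal{G}$ gives $\Hc^{p}(B;\mathcal{G}) \cong \HH^{2b-p}(B;\mathcal{G}^{\vee})^{\vee}$. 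Covering $B$ by $k$ affines and applying Mayer--Vietoris, as in~\eqref{eq:affine-vanishing} but with twisted coefficients (Artin vanishing holds for constructible sheaves), gives $\HH^{j}(B;\mathcal{G}^{\vee})=0$ for $j > b+k-1$. Since $2b-p > b+k-1$ exactly when $p < b-k+1 = c$, this yields the vanishing for $p<c$.
\end{itemize}

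For the rank statement in case~(b) I would argue through the top nonvanishing group. The Mayer--Vietoris argument exhibits $\HH^{b+k-1}(B;\mathcal{G})$ as a quotient of $\HH^{b}$ of the $k$-fold intersection of the affines, which is the same for all rank-one $\mathcal{G}$ up to twisting. I expect this rank comparison with the untwisted group $\Hc^{c}(B)$, together with the dévissage for nonconstant $\mathcal{L}$, to be the delicate step. Failing that, I would follow \JPSW[Lemma~6.1] and note that in the intended applications the monodromy on $\Hc^{f}(F)\cong \ZZ/2$ is trivial, since $\operatorname{Aut}(\ZZ/2)$ is trivial. Indeed, any rank-one $\ZZ/2$-local system is constant, so $\mathcal{L}\cong\ZZ/2$ in both cases. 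This gives $\Hc^{c}(B;\mathcal{L})=\Hc^{c}(B)$, of rank one, so $\cp E = \cp F+\cp B$ with rank-one critical group.
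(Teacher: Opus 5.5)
Your final argument is correct. It is the standard compactly supported Leray spectral sequence proof of the lemma; the paper gives no proof and simply cites \JPSW[Lemma~6.1]. The argument has three parts. First, $E_2^{p,q}$ vanishes for $q<\cp F$ and for $p<\cp B$: in case (a) because all coefficient sheaves are constant, and in case (b) by twisted Poincar\'e duality plus Artin vanishing over the $k$ affines. Second, $R^{\cp F}\pi_!\ZZ/2$ is constant. Hence the corner term is $\Hc^{\cp B}(B)\cong\ZZ/2$.

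That said, two intermediate claims are false and should be deleted.

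\emph{Rank-one local systems.} A rank-one local system of $\ZZ/2$-modules is not given by a character $\pi_1(B)\to\{\pm1\}$. Its monodromy lies in $\GL_1(\ZZ/2)=1$, so it is always constant. A connected double cover $\rho$ yields the rank-two system $\rho_*\ZZ/2$, not a rank-one one. Your last paragraph says exactly this, contradicting the earlier sentence, and the hedge ``in the intended applications'' is unnecessary.

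\emph{The d\'evissage.} A locally constant constructible sheaf of $\ZZ/2$-modules need not admit a filtration with rank-one graded pieces, because its monodromy need not be a $2$-group. For example, $S_3\cong\GL_2(\ZZ/2)$ acts irreducibly on $(\ZZ/2)^2$. If such a filtration always existed, every such sheaf would be an iterated extension of constant sheaves. Hypothesis (3) could then be dropped apart from the rank-one condition on $\Hc^{\cp B}(B)$, which is a sign the claim is too strong.

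Neither false claim is load-bearing. Your two case arguments already give $\Hc^p(B;\mathcal{G})=0$ for $p<\cp B$ for every locally constant $\mathcal{G}$. The ``rank comparison'' you anticipate as delicate in case (b) never arises, because the corner term involves only $\mathcal{L}\cong\ZZ/2$.
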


\section{Preliminary calculations}
\label{sec:calculations}

    If $X$ is an $m \times n$ matrix over a ring $S$ and $t \ge 1$ an integer, we let $\Delta_{t}(X)$ denote the (possibly empty) set of $t$-sized minors. 
  In turn, $I_{t}(X)$ denotes the ideal of $S$ generated by $\Delta_{t}(X)$. 
  Given $i \in [m]$ and $j \in [n]$, we denote the $(i, j)^{\text{th}}$ entry of $X$ by $[X]_{i, j}$, or simply, $[X]_{i j}$.
  Given subsets $A \subseteq [m]$ and $B \subseteq [n]$ of size $t$, we denote by $\minor{A}{B}_{X}$ the $t$-sized minor of $X$ obtained by selecting rows and columns of $X$ whose indices lie in $A$ and $B$, respectively. 
  If $m = t$ (resp. $n = t$), we then omit $A$ (resp. $B$) from the notation with the understanding that we have $A = [t]$ (resp. $B = [t]$). 

  \begin{lemma} \label{lem:one-more-generator}
    If $\fraka \subset \frakb$ are ideals in a ring $S$ with $\frakb = \fraka + b S$,
    then $\ara \fraka \ge \ara \frakb - 1$.
  \end{lemma}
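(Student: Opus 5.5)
The plan is to take a minimal radical-generating set for $\fraka$, add $b$ to it, and show the result generates $\frakb$ up to radical; this gives $\ara \frakb \le \ara \fraka + 1$, which is the claim. The only tool needed is the elementary identity $\rad(I + J) = \rad(\rad I + J)$ for ideals $I, J \subseteq S$.

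First dispose of the infinite case: if $\ara \fraka = \infty$ there is nothing to prove. Otherwise set $k \colonequals \ara \fraka$ and choose $f_{1}, \ldots, f_{k} \in S$ with $\rad(f_{1}, \ldots, f_{k})S = \rad \fraka$.

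Next, apply the identity twice to the ideal generated by the $f_{i}$ together with $b$:
\begin{equation*}
  \rad(f_{1}, \ldots, f_{k}, b)S
  = \rad\bigl(\rad(f_{1}, \ldots, f_{k})S + bS\bigr)
  = \rad(\rad \fraka + bS)
  = \rad(\fraka + bS)
  = \rad \frakb .
\end{equation*}
The first two equalities use the identity with $I = (f_{1}, \ldots, f_{k})S$ and then the choice of the $f_{i}$; the third uses the identity again with $I = \fraka$; the last is the hypothesis $\frakb = \fraka + bS$.

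Hence $\frakb$ is generated up to radical by the $k+1$ elements $f_{1}, \ldots, f_{k}, b$, so $\ara \frakb \le k + 1 = \ara \fraka + 1$, which rearranges to the statement. I do not expect any real obstacle: the only point needing a line of justification is the radical identity. Its inclusion $\subseteq$ is clear; for $\supseteq$, note that $\rad I + J \subseteq \rad(I + J)$, and taking radicals of both sides gives the result.
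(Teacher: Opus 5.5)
Your proposal is correct and is the same argument as the paper's: adjoin $b$ to a set of elements generating $\fraka$ up to radical to get $\ara \frakb \le \ara \fraka + 1$. You spell out the radical identity $\rad(I+J) = \rad(\rad I + J)$ and the infinite case, both of which the paper leaves implicit.
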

  \begin{proof} 
    If $\rad (\underline{a} S) = \rad \fraka$, then $\rad((\underline{a}, b)S) = \rad \frakb$, 
    giving us $\ara \frakb \le \ara \fraka + 1$. 
  \end{proof}

  \begin{lemma} \label{lem:ara-det-inside-det}
    Let $R \colonequals K[X_{m \times n}]/I_{r+1}(X)$,
    where we assume $r+1 \le n \le m$. 
    Consider the ideal $J \colonequals I_{r}(X) R$ inside the determinantal ring. 
    Then,
    \begin{equation*} 
      \ara J \le r(m+n-2r)+1.
    \end{equation*}
  \end{lemma}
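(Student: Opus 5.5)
The plan is to follow the Bruns–Schwänzl strategy for determinantal ideals. We use the structure of $R$ as an algebra with straightening law (ASL) on its poset of minors, and produce $r(m+n-2r)+1$ explicit elements, one for each rank level of the $r$-minors, that generate $J$ up to radical.

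\textbf{Step 1: the ASL structure.} By De Concini--Eisenbud--Procesi (equivalently, Bruns--Vetter), $R = K[X]/I_{r+1}(X)$ is a graded ASL over $K$ on the poset $P$ of all minors $\minor{A}{B}_X$ of size at most $r$. We order $P$ so that $\minor{a_1,\dots,a_s}{b_1,\dots,b_s} \le \minor{c_1,\dots,c_u}{d_1,\dots,d_u}$ if and only if $s \ge u$ and $a_i \le c_i$, $b_i \le d_i$ for $i \le u$. The reason $P$ is a valid ASL poset for $R$ is that the minors of size $r+1$ vanish in $R$. With this order, the set $\Omega = \Delta_r(X)$ of $r$-minors is a poset ideal of $P$, i.e.\ it is closed downward. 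We will therefore use the ASL form of the ideal $J = \Omega R$.

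\textbf{Step 2: count the rank levels of $\Omega$.} Inside $\Omega$, the order is componentwise comparison of the row and column index sets. Hence $\Omega$ is graded by
\[
\rho\bigl(\minor{A}{B}\bigr)=\sum_i (a_i - i) + \sum_i (b_i - i).
\]
This rank runs from $0$, attained at $\minor{1,\dots,r}{1,\dots,r}$, to $r(m-r)+r(n-r)$, attained at $\minor{m-r+1,\dots,m}{n-r+1,\dots,n}$, and every intermediate value occurs. So $\Omega$ has exactly $r(m+n-2r)+1$ rank levels. For each level $i$ we set
\[
q_i=\sum_{x\in\Omega,\ \rho(x)=i} x .
\]

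\textbf{Step 3: the radical claim.} We must show $\rad(q_0,\dots,q_N)R = \rad J$, where $N = r(m+n-2r)$. The inclusion $\subseteq$ is clear. For the reverse inclusion it suffices to work modulo a prime $\frakp \supseteq (q_0,\dots,q_N)$ and show that every $x \in \Omega$ lies in $\frakp$. We argue by induction on the rank $\rho(x)$.

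Suppose all elements of $\Omega$ of rank below $i$ already lie in $\frakp$, and let $x, y \in \Omega$ be distinct and incomparable of rank $i$. The straightening relation expresses $xy$ as a $K$-linear combination of standard monomials $z_1 z_2 \cdots$ with $z_1 < x$ and $z_1 < y$. Since $\Omega$ is a poset ideal, such a $z_1$ lies in $\Omega$ and has smaller rank, so $z_1 \in \frakp$ by induction. Hence $xy \in \frakp$ whenever $x \ne y$ are of rank $i$, because distinct elements of equal rank are incomparable.

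Now take $x$ of rank $i$ and multiply $q_i$ by $x$:
\[
x q_i \equiv x^2 \pmod{\frakp}.
\]
Since $q_i \in \frakp$, this gives $x^2 \in \frakp$, and primality gives $x \in \frakp$. This completes the induction, and counting the $q_i$ yields the bound $\ara J \le r(m+n-2r)+1$.

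\textbf{Main obstacle.} The delicate point is Step 1. We must check that the ordering puts the $r$-minors at the bottom, so that they form a poset ideal. We must also check that in the straightening law the first factor of each standard monomial is strictly below both factors, and in particular lies in $\Omega$ and has smaller rank. This requires some care, since in $R$ the product of two $r$-minors could a priori straighten into terms involving smaller minors. However, the ASL axiom guarantees that the first factor $z_1$ is below both $x$ and $y$, and $\Omega$ is downward closed, so $z_1$ lies in $\Omega$ automatically. I would cite the ASL structure as in Bruns--Vetter and verify only the grading in Step 2 directly.
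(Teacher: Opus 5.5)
Your proposal is correct and is essentially the paper's argument: both use the ASL structure of $R$ on the minors of size at most $r$, with $\Omega=\Delta_r(X)$ a poset ideal generating $J$, and bound $\ara J$ by the number of elements in a maximal chain of $\Omega$. The only differences are that you reprove inline, via the rank-level sums $q_i$ and the straightening law, the result of Bruns that the paper simply cites, and that you compute the number of rank levels exactly through the grading $\rho$ rather than bounding it by the index-sum argument.
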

  \begin{proof} 
    Recall that $R$ is an ASL on the poset $\Pi$ consisting of all $t$-sized minors of $X$ with $t \le r$; we refer the reader to \cite{Bruns:AdditionASL} for more details. 
    The set $\Omega \subset \Pi$ of $r$-sized minors forms a poset ideal that generates $J$.
    By \cite[Proposition~2.1]{Bruns:AdditionASL}, we obtain $\ara J \le \rank \Omega$, where $\rank \Omega$ is the longest length of a chain in $\Omega$. 
    The rank is at most $r(m+n-2r)+1$;
    indeed, the extremal elements of $\Omega$ are
    \begin{equation*} 
      \minor{1, \ldots, r}{1, \ldots, r}_{X} 
      \andd
      \minor{m-r+1, \cdots, m}{n-r+1, \ldots, n}_{X}, 
    \end{equation*} 
    and the sum of indices increases by at least one among consecutive elements in a chain. 
  \end{proof}

  \begin{lemma} \label{lem:cpd-locally-closed-union}
    Let $V$ be a variety with a partition $V = \bigsqcup_{i=0}^{t-1} V_{i}$, 
    and set $V_{\le j} \colonequals \bigsqcup_{i=0}^{j} V_{i}$ for $0 \le j < t$. 
    Assume that $V_{j}$ is open in $V_{\le j}$ (this implies that each $V_{j}$ is locally closed) and that $\cp V_{j+1} < \cp V_{j}$ for all $j$. 
    Then, $\cp V = \cp V_{t-1}$. 
  \end{lemma}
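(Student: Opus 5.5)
We argue by induction on $j$, proving the stronger claim that $\cp V_{\le j} = \cp V_{j}$ for every $0 \le j < t$. The case $j = t-1$ is the statement, since $V_{\le t-1} = V$. For $j = 0$ there is nothing to prove, because $V_{\le 0} = V_{0}$. The only tool needed is the long exact sequence of a subspace~\eqref{eq:les-triple}. We use the convention $\Hc^{i} = 0$ for $i < 0$, which is harmless since compactly supported cohomology vanishes in negative degrees.

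For the inductive step, suppose $\cp V_{\le j} = \cp V_{j}$ and consider $V_{\le j+1} = V_{\le j} \sqcup V_{j+1}$. By hypothesis $V_{j+1}$ is open in $V_{\le j+1}$, so its complement $V_{\le j}$ is closed there. Applying \eqref{eq:les-triple} with $U = V_{j+1}$ and $W = V_{\le j}$ gives
\begin{equation*}
  \Hc^{i-1}(V_{\le j}) \to \Hc^{i}(V_{j+1}) \to \Hc^{i}(V_{\le j+1}) \to \Hc^{i}(V_{\le j}) \to \Hc^{i+1}(V_{j+1}).
\end{equation*}
Set $c \colonequals \cp V_{j+1}$. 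By the inductive hypothesis and the assumption of the lemma, $c < \cp V_{j} = \cp V_{\le j}$.

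For $i < c$, both $\Hc^{i}(V_{j+1})$ and $\Hc^{i}(V_{\le j})$ vanish, because $i < c < \cp V_{\le j}$. Exactness then forces $\Hc^{i}(V_{\le j+1}) = 0$.

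For $i = c$, the left term $\Hc^{c-1}(V_{\le j})$ vanishes since $c - 1 < \cp V_{\le j}$. The right term $\Hc^{c}(V_{\le j})$ also vanishes since $c < \cp V_{\le j}$. Hence $\Hc^{c}(V_{\le j+1}) \cong \Hc^{c}(V_{j+1}) \neq 0$, and so $\cp V_{\le j+1} = c = \cp V_{j+1}$, completing the induction.

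There is no real obstacle; the only points needing care are the direction of the open/closed roles in \eqref{eq:les-triple} and the fact that the strict inequality is used on both sides of degree $c$. The argument also shows that the critical cohomology group of $V$ is isomorphic to that of $V_{t-1}$, which is presumably how the lemma will be combined with Lemma~\ref{lem:cpdim-fiber} later.
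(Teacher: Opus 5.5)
Your proposal is correct and is the paper's argument: induct on $j$ to show $\cp V_{\le j} = \cp V_{j}$, using the long exact sequence~\eqref{eq:les-triple} for $V_{\le j+1} = V_{j+1} \sqcup V_{\le j}$ with $V_{j+1}$ open. You have written out the degree-by-degree vanishing that the paper leaves implicit. One small correction to your closing comment: the strict inequality is not needed on both sides of degree $c$. Injectivity of $\Hc^{c}(V_{j+1}) \to \Hc^{c}(V_{\le j+1})$ already gives the equality of compact dimensions, and that only requires $\cp V_{j+1} \le \cp V_{\le j}$. Strictness is what adds surjectivity, and hence the isomorphism of critical cohomology groups.
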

  \begin{proof}
    It suffices to show that $\cp V_{\le j} = \cp V_{j}$ for all $j$. 
    This is clear for $j = 0$, and for $j \ge 1$, it follows inductively from the long exact sequence~\eqref{eq:les-triple} applied to the decomposition $V_{\le j+1} = V_{j+1} \sqcup V_{\le j}$.
  \end{proof}

  \begin{prop} \label{prop:cpd-Vr}
    Let $t \ge 2$ be an integer, $K$ a field, and for $0 \le r < t$, set 
    \begin{equation*} 
      V_{r} \colonequals 
      \{
        (A, B) \in K^{t \times t} \times K^{t \times (t - 1)}
        :
        \rank A = r \text{ and } A B = 0
      \}.
    \end{equation*}
    Then, $\cp V_{r} = r^{2} + 2(t - r)(t - 1)$. 
  \end{prop}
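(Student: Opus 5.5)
We realize $V_r$ as the total space of a fiber bundle and apply Lemma~\ref{lem:cpdim-fiber}. (The \'etale computations are over an algebraically closed field, as in Section~\ref{sec:etale}.)

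Consider the projection $\pi\colon V_r \to W_r \colonequals \{A \in K^{t\times t} : \rank A = r\}$, $(A,B)\mapsto A$. The fiber over $A$ is $\{B : AB = 0\} = (\ker A)^{t-1} \cong K^{(t-r)(t-1)}$. Over the open cover of $W_r$ by the sets where a fixed $r$-minor is nonzero, $\ker A$ has a basis depending algebraically on $A$ (Cramer's rule). Hence $\pi$ is a locally trivial vector bundle of rank $(t-r)(t-1)$.

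For an affine space $F = \AA^N$ we have $\Hc^i(F) = \ZZ/2$ for $i = 2N$ and zero otherwise. So $\cp F = 2(t-r)(t-1)$, with critical group of rank one. It therefore remains to show that $\cp W_r = r^2$ with rank one critical group, and to verify hypothesis (3) of Lemma~\ref{lem:cpdim-fiber} for $W_r$.

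For $W_r$, use the standard description as a bundle. The map $A\mapsto(\operatorname{im}A,\ker A)$ realizes $W_r$ as a $\GL_r$-bundle over $\Gr(r,t)\times\Gr(t-r,t)$. Alternatively, $W_r \to \Gr(r,t)$, $A \mapsto \operatorname{im}A$, has fiber the $t\times r$ matrices of full rank $r$ (after choosing a basis of the image), i.e.\ $\{C \in K^{r\times t}: \rank C = r\}$. The route I would try first is to cite the computation in \JPSW, which (in the $\GL$ and $\SL$ sections) computes the compact dimension of rank-$r$ matrix loci.

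If that citation is unavailable, I would argue via Lemma~\ref{lem:cpdim-fiber} applied repeatedly. Full-rank $r\times t$ matrices fiber over $K^t\setminus\{0\}$ by their first row, with fiber full-rank $(r-1)\times t$ matrices modulo the first row. This gives a tower with fibers $K^{t}\setminus K^{i}$ for $i=0,\dots,r-1$. One has $\cp(K^t\setminus K^i) = 2i+1$, with rank one critical group, by \eqref{eq:les-triple}. The base $K^t\setminus\{0\}$ is simply connected when $t\ge2$.

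The expected total is
\[
\cp(\text{full rank } r\times t) = \sum_{i<r}(2i+1) = r^2 .
\]
Then $W_r$ itself is this locus with an extra Grassmannian-type base of dimension $r(t-r)$, whose lowest compactly supported cohomology lies in degree $0$ since it is projective. So we would get $\cp W_r = r^2 + 0 = r^2$, with the Grassmannian simply connected with critical group $\HH^0$ of rank one, so that (3)(a) applies.

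I expect the main obstacle to be this bookkeeping for $W_r$: checking local triviality in the Zariski (or \'etale) topology and the simple-connectedness hypotheses at each stage. In particular, Lemma~\ref{lem:cpdim-fiber}(3)(a) needs the base to be simply connected, which fails for $\GL$-type bases; this is why I would build the tower from the punctured affine spaces $K^t\setminus K^i$ rather than from $\GL_r$.

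Finally, apply Lemma~\ref{lem:cpdim-fiber} to $F\to V_r\to W_r$. The base $W_r$ is smooth and irreducible, and it is simply connected when $r<t$, since its complement in a full-rank-type space has codimension at least $2$ except in small cases, which need a separate check. This gives
\[
\cp V_r = 2(t-r)(t-1) + r^2,
\]
as claimed. The case $r=0$ is just $V_0 = \{0\}\times K^{t(t-1)}$, with $\cp = 2t(t-1)$.
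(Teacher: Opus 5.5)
Most of your outline agrees with the paper. You trivialize $\pi\colon V_r\to W_r$ over the opens where a fixed $r$-minor is nonzero, the fiber $K^{(t-r)(t-1)}$ has $\cp = 2(t-r)(t-1)$ with rank-one critical group, and $\cp W_r=r^2$ (rank one) comes from the $\GL_r$-bundle over $\Gr(t-r,t)\times\Gr(r,t)$. In that bundle $\GL_r$ is the \emph{fiber} and the base is a product of Grassmannians, so Lemma~\ref{lem:cpdim-fiber}(3a) applies directly; your worry about $\GL$-type bases does not arise, and the tower is unnecessary.

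The gap is in the final step. There you need hypothesis (3) of Lemma~\ref{lem:cpdim-fiber} for the base $W_r$, and you choose (3a). Your justification of simple connectivity is not an argument. $W_r$ is open in the singular determinantal variety $\VV(I_{r+1}(X))$, not in a smooth space, so no codimension-two purity statement applies. You also leave ``small cases'' unchecked. In characteristic zero the claim is true: $W_r$ is the quotient of the simply connected product of the full-rank $t\times r$ and $r\times t$ loci by a free $\GL_r$-action. But you did not give this argument.

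More seriously, in characteristic $p>0$ the claim is false, and that is exactly where the proposition is needed (Case~3 of Theorem~\ref{thm:ecd-computation}). Here $W_r$ is not \etale\ simply connected. Let $D=\operatorname{diag}(1,\ldots,1,0,\ldots,0)$ have $r$ ones and let $e_{1t}$ be the matrix unit. The line $s\mapsto D+s\,e_{1t}$ stays in $W_r$. Pulling back the Artin--Schreier cover $z^p-z=[X]_{1t}$ of $W_r$ along this line gives the nonsplit cover $z^p-z=s$ of $\AA^1$. So (3a) simply is not available there. The same objection applies to your claim that $K^t\setminus\{0\}$ is simply connected.

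The paper instead verifies (3b). $W_r$ is smooth, since the singular locus of $K[X]/I_{r+1}(X)$ is $\VV(I_r(X))$, and has dimension $b=(2t-r)r$. With $\cp W_r=r^2$, one needs a cover of $W_r$ by $k=b-r^2+1=2r(t-r)+1$ affines. This covering bound is the ingredient your proposal is missing, and it is Lemma~\ref{lem:ara-det-inside-det}. By Bruns's ASL argument, $I_r(X)$ is generated up to radical in $K[X]/I_{r+1}(X)$ by $2r(t-r)+1$ elements, and their nonvanishing loci are affine and cover $W_r$.

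Alternatively, you can drop hypothesis (3) altogether, because your fiber is an affine space. Set $N=(t-r)(t-1)$. The sheaves $R^q\pi_!\ZZ/2$ vanish for $q\neq 2N$. Also $R^{2N}\pi_!\ZZ/2$ is locally constant with stalks $\ZZ/2$, hence constant since $\operatorname{Aut}(\ZZ/2)$ is trivial. The compactly supported Leray spectral sequence then collapses to $\Hc^{i}(V_r)\cong\Hc^{i-2N}(W_r)$. This gives $\cp V_r=r^2+2(t-r)(t-1)$ with no condition on the base.
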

  \begin{proof} 
    The statement is clear for $r=0$ as $V_{0} = \{0\} \times K^{t \times (t - 1)}$, 
    so we assume $1 \le r < t$ and set $V \colonequals V_{r}$ and
    $S \colonequals \{A \in K^{t \times t} : \rank A = r\}$. 

    By \cite[Lemma~6]{BrunsSchwanzl}, there is a locally trivial fiber bundle
    \begin{equation*} 
      \GL_{r} \to S \to \Gr(t-r, t) \times \Gr(r, t)
    \end{equation*}
    satisfying the hypotheses (1), (2), (3a) of Lemma~\ref{lem:cpdim-fiber}, and thus $\cp S = r^{2} + 0 = r^{2}$ with the critical cohomology group of $S$ being rank one; 
    we refer the reader to \cite[\S3~(4)]{BrunsSchwanzl} for the properties about grassmannians that we have used, namely that they are simply-connected, of compact dimension zero, and with critical cohomology group of rank one.

    We claim that there is a Zariski locally trivial fiber bundle
    \begin{equation} \label{eq:bundle}
      \begin{tikzcd}
        {K^{(t-r) \times (t-1)}} \arrow[r] &
        {V} \arrow[r, "\pi"] &
        {S,}
      \end{tikzcd}
    \end{equation}
    where $\pi$ maps $(A, B) \in V$ to $A$. 
    To this end, consider the open cover of $S$ given by the sets 
    \begin{equation*} 
      U_{\sigma, \tau} \colonequals \{A \in S : \minor{\sigma}{\tau}_{A} \neq 0\},
    \end{equation*}
    for $\sigma, \tau \subseteq [t]$ with $\md{\sigma} = \md{\tau} = r$.
    For each $U = U_{\sigma, \tau}$, we now show that 
    $\pi^{-1}(U) \to U$ is isomorphic to the projection
    $U \times K^{(t-r)(t-1)} \to U$. 
    Without loss of generality, we may assume that $\sigma = \tau = [r]$, so that $U = U_{[r], [r]}$. 
    Any $A \in U$ can thus be written as 
    \begin{equation*} 
      A = 
      \begin{bmatrix}
        A_{11} & A_{12} \\
        A_{21} & A_{22}
      \end{bmatrix}_{t \times t},
    \end{equation*}
    where $A_{11}$ is invertible of size $r \times r$. 
    As $\rank A_{11} = \rank A$, the row space of $A$ is spanned by the first $r$ rows, i.e., there exists $C \in K^{(t-r) \times r}$ such that
        \begin{equation*}
            A_{21} = C A_{11} 
            \andd 
            A_{22} = C A_{12}.
        \end{equation*}
        As $A_{11}$ is invertible, we may solve for $C$ using the first equation and substitute it in the second to obtain
    \begin{equation*} 
      A_{22} = A_{21} A_{11}^{-1} A_{12}.
    \end{equation*}
    Using this, we see that the $\ker A$ consists precisely of column vectors of the form
    \begin{equation*} 
      \begin{bmatrix}
        -A_{11}^{-1} A_{12} v \\
        v
      \end{bmatrix}_{t \times 1}
    \end{equation*}
    for $v \in K^{t-r}$. 
    In turn, if $B \in K^{t \times (t-1)}$ satisfies $A B = 0$, then every column of $B$ is of the above form, giving us
    \begin{equation*} 
      B = 
      \begin{bmatrix}
        -A_{11}^{-1} A_{12} W \\
        W
      \end{bmatrix}_{t \times (t-1)}
    \end{equation*}
    for some $W \in K^{(t-r) \times (t-1)}$. 
    Thus, we have the desired isomorphism
    \begin{align*} 
        \pi^{-1}(U) & \;\;\cong\;\; U \times K^{(t-r) \times (t-1)} \\
        (A, B) & \mapsto (A, \text{last $(t-r)$ rows of $B$}) \\
        \left(A, \smatrix{-A_{11}^{-1} A_{12} W \\ W} \right)& \mapsfrom (A, W),
    \end{align*}
    giving us the locally trivial fiber bundle. 

    It is clear that the fiber bundle~\eqref{eq:bundle} satisfies conditions (1) and (2) of Lemma~\ref{lem:cpdim-fiber}. 
    We now verify condition (3b): viewing $S$ as a subset of the affine space $K^{t \times t}$ corresponding to $K[X_{t \times t}]$, we note that $S$ is given by
    \begin{equation*} 
      S = \VV(I_{r+1}(X)) \setminus \VV(I_{r}(X)).
    \end{equation*}
    The smoothness of $S$ follows from the fact that
    the singular locus of $R \colonequals K[X]/I_{r+1}(X)$ is defined by $I_{r}(X)$, see \cite[Proposition~7.3.4]{BrunsHerzog}. 
    As $S$ is a nonempty open subset of $\VV(I_{r+1}(X))$, we obtain
    $\dim S = \dim R = (2t - r)r$, where the last equality is \cite[Theorem~7.3.1]{BrunsHerzog}. 
    We computed earlier that $\cp S = r^{2}$ and that $S$ has critical cohomology group of rank one. 
    Thus, it only remains to be shown that $S$ can be covered by $k$ affines, where 
    $k = \dim S - \cp S + 1 = (2t - 2r)r + 1$. 
    This now follows from Lemma~\ref{lem:ara-det-inside-det}.  
  \end{proof}

\section{The special linear group}\label{sec:SL}

  Consider the natural action of the special linear group $G \colonequals \SL_{t}(K)$ on the polynomial ring $S \colonequals K[X_{m \times t},\, Y_{t \times n}]$, where $t, m, n \ge 1$ are integers. 
  It is clear that elements of the subring $R \colonequals K[\Delta_{t}(X),\, \Delta_{t}(Y),\, X Y]$ are invariant under the action of $\SL_{t}(K)$. 
  By \cite[Theorem~3.3]{DeConciniProcesi76}, we have the equality $R = S^{\SL_{t}(K)}$ when $K$ is infinite, and thus $\frakU\colonequals I_{t}(X) + I_{t}(Y) + I_{1}(X Y)$ is the nullcone ideal for this action. 

  \begin{remark} \label{rem:ara-bound-dim}
    If $\underline{f} \subset R$ is any homogeneous system of parameters for $R$, 
    then we also have $\rad(\underline{f}S) = \rad \frakU$, 
    giving us $\ara \frakU \le \dim R$. 
  \end{remark}

  We shall compute the arithmetic rank of $\frakU$ and show that the above is an equality when $\chr(K) \neq 2$. 
  Note that the interesting case is when $t \le \max(m, n)$, else $\frakU = I_{1}(X Y)$ and the arithmetic rank is computed in \JPSW[Theorem~4.1]. 
  We first compute $\dim R$.

  \begin{theorem} \label{thm:dim-aux-ring}
    Let $K$ be a field, and $t, m, n \ge 1$ be integers with $t \le \max(m, n)$. 
    Consider the subring
    $R \colonequals K[\Delta_{t}(X), \Delta_{t}(Y), X Y] \subset 
    K[X_{m \times t}, Y_{t \times n}]$. 
    We have $\dim R = (m + n - t)t + 1$. 
  \end{theorem}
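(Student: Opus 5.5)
We will compute $\dim R$ as the transcendence degree of $\Frac(R)$ over $K$; this is legitimate because $R$ is a finitely generated $K$-domain. The statement is symmetric in $X$ and $Y$: the transpose $X \mapsto Y^{\tr}$, $Y \mapsto X^{\tr}$ interchanges the two matrices and sends $XY$ to $(XY)^{\tr}$. The claimed value $(m+n-t)t+1$ is also symmetric in $m$ and $n$. So we may assume without loss of generality that $m \ge t$. Then $\Delta_t(X)$ is nonempty, and we let $\delta \colonequals \minor{1,\ldots,t}{}_X$ be the minor on the first $t$ rows, which is a nonzero element of $R$.

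The plan is to pass to the localization $S[\delta^{-1}]$ and write
\begin{equation*}
  X = \begin{bmatrix} X_1 \\ X_2 \end{bmatrix},
\end{equation*}
where $X_1$ is the invertible $t \times t$ block. Introduce $Z \colonequals X_1 Y$ of size $t \times n$ and $W \colonequals X_2 X_1^{-1}$ of size $(m-t) \times t$. Since $Y = X_1^{-1} Z$ and $X_2 = W X_1$, the change of variables $(X_1, X_2, Y) \leftrightarrow (X_1, W, Z)$ identifies $S[\delta^{-1}]$ with $K[X_1, W, Z][\det(X_1)^{-1}]$, in which all entries of $X_1$, $W$, $Z$ are independent indeterminates. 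The key claim is then the equality of fields
\begin{equation*}
  \Frac(R) = K\bigl(\delta,\ \text{entries of } Z,\ \text{entries of } W\bigr).
\end{equation*}

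To prove the inclusion ``$\supseteq$'' we check each generator. First, $\delta \in R$. Second, the entries of $Z$ are among the entries of $XY$. Third, by Cramer's rule each entry of $W = X_2 X_1^{-1}$ is $\pm$ a $t$-minor of $X$ divided by $\delta$: the minor is obtained by replacing one of the first $t$ rows by a row of $X_2$.

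To prove ``$\subseteq$'' we express each generator of $R$ in terms of $\delta$, $Z$, $W$:
\begin{itemize}
  \item $XY = \smatrix{Z \\ WZ}$;
  \item every $t$-minor of $X = \smatrix{\boldone_t \\ W} X_1$ equals $\delta$ times a minor of $\smatrix{\boldone_t \\ W}$, hence lies in $K[\delta, W]$;
  \item every $t$-minor of $Y = X_1^{-1} Z$ equals $\delta^{-1}$ times the corresponding $t$-minor of $Z$.
\end{itemize}
This gives the upper bound $\trdeg_K \Frac(R) \le 1 + tn + t(m-t) = (m+n-t)t+1$.

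For the matching lower bound we need $\delta = \det X_1$ together with the entries of $Z$ and $W$ to be algebraically independent over $K$. This holds because $Z$ and $W$ consist of independent indeterminates that are also independent of $X_1$, while $\det X_1$ is a nonconstant polynomial in the separate variables $X_1$, hence transcendental over $K(Z, W)$. Together the two bounds give $\dim R = (m+n-t)t+1$.

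The main point to get right is the Cramer's rule step: it is what shows that the single invariant $\delta$, together with the ratios of $t$-minors of $X$ and the entries of $XY$, already generates the whole fraction field, so that no further transcendental elements are hiding among the minors of $Y$. Once this is in place, the counting and the independence argument are routine.
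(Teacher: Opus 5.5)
Your proof is correct. It reaches the result by a genuinely different and shorter route than the paper, above all for the lower bound.

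\textbf{Upper bound.} The paper also exhibits $(m+n-t)t+1$ generators of $\Frac(R)$, but a different set:
\begin{itemize}
\item $\delta$;
\item the minors $\minor{1,\ldots,\hat{i},\ldots,t,j}{}_X$, but only those with $1<i\le t<j\le m$;
\item the entire first column of $XY$;
\item the remaining entries of the first $t$ rows of $XY$.
\end{itemize}
It cites \BMMP[Theorem~A.2] to recover $\Delta_t(X)$, and uses a $(t+1)\times(t+1)$ determinant expansion to recover the other entries of $XY$. Your factorizations $X=\smatrix{\boldone_t \\ W}X_1$ and $Y=X_1^{-1}Z$ handle every generator of $R$ at once and need no outside input.

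\textbf{Lower bound.} The paper's appendix proceeds combinatorially:
\begin{itemize}
\item it builds a poset on the minors of $X$, $Y$ and $XY$;
\item it computes the poset's rank (Lemma~\ref{lem:poset_rank});
\item it shows that standard monomials have distinct lex-initial terms;
\item it concludes that the elements of a maximal chain are algebraically independent.
\end{itemize}
You instead use the birational change of coordinates $(X_1,X_2,Y)\leftrightarrow(X_1,W,Z)$ on $S[\delta^{-1}]$. This makes $\delta$ and the entries of $W$ and $Z$ algebraically independent immediately. Both bounds then come from one computation, which also shows that $\Frac(R)$ is purely transcendental over $K$ with an explicit basis.

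One step deserves an extra line: the algebraic independence of the entries of $X_1$, $W$, $Z$. Either write down the two mutually inverse $K$-algebra maps, or note that these $mt+nt$ elements generate $\Frac(S)$, whose transcendence degree is $mt+nt$.

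\textbf{What each route buys.} Yours is far more economical. The paper's appendix earns its keep because the poset $B$ and its rank are reused in Corollary~\ref{cor:STgens}. There, via the ASL structure when $\min(m,n)<t\le\max(m,n)$, they yield explicit minimal set-theoretic generators of the nullcone ideal.
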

  \begin{proof} 
    Without loss of generality, we may assume $n \le m$, so that $\Delta_{t}(X)$ is nonempty. 
    We set $d \colonequals (m + n - t)t + 1$, the purported dimension. 
    As $R$ is a finitely generated $K$-algebra, we have 
    $\dim R = \trdeg_{K}(\Frac(R))$, 
    so it suffices to show that the field $Q \colonequals \Frac(R)$ can be generated over $K$ by $d$ elements and no fewer. 
    Note that we have the equality 
    \begin{equation*} 
      Q \;=\; K(\Delta_{t}(X), \Delta_{t}(Y), X Y)
      \;=\; K(\Delta_{t}(X), X Y)
    \end{equation*}
    in view of the equation
    \begin{equation*} 
      \minor{i_{1}, \ldots, i_{t}}{}_{Y}
      =
      \frac
      {\minor{1, \ldots, t}{i_{1}, \ldots, i_{t}}_{X Y}}
      {\minor{1, \ldots, t}{}_{X}}.
    \end{equation*}
    We now describe a generating set for $Q$. We define 
    \begin{align*} 
      B_{1} &\colonequals \{\minor{1, \ldots, t}{}_{X}\} \cup 
      \left\{
      \minor{1, \ldots, \hat{i}, \ldots, t ,j}{}_{X} 
      : 1 < i \le t < j \le m\right\}, \\
      B_{2} &\colonequals 
      \{[X Y]_{i 1} : 1 \le i \le m\}, \\
      B_{3} & \colonequals 
      \{[X Y]_{i j} : 1 \le i \le t,\, 1 < j \le n\}.
    \end{align*} 
    We set $B \colonequals B_{1} \sqcup B_{2} \sqcup B_{3}$ and note that 
    \begin{equation*} 
      \md{B} 
      = (1 + (t-1)(m-t)) + m + t(n - 1)
      = d.
    \end{equation*}
    We wish to show that $Q = K(B)$; it suffices to show that 
    $\Delta_{t}(X) \subset K(B)$ and 
    $\Delta_{1}(X Y) \subset K(B)$.
    The former is shown in the proof of \BMMP[Theorem A.2], 
    where they show the stronger statement that
    $\Delta_{t}(X) \subset K(B_{1} \sqcup B_{2})$. 
    Suppose now that $q$ is an entry of $X Y$ that is not already included in $B$, i.e., 
    $q = [X Y]_{i j}$ for some $i \ge t+1$ and $j \ge 2$. 
    Consider the square matrix
    \begin{equation*} 
      M \colonequals 
      \begin{bmatrix}
        [X Y]_{i j} & X_{i 1} & \cdots & X_{i t} \\
        [X Y]_{1 j} & X_{1 1} & \cdots & X_{1 t} \\
        \vdots & \vdots & \ddots & \vdots \\
        [X Y]_{t j} & X_{t 1} & \cdots & X_{t t} \\
      \end{bmatrix}_{(t + 1) \times (t + 1)}.
    \end{equation*}
    The first column of $M$ is evidently a $K[Y]$-linear combination of the others, and hence $\det M = 0$. 
    Expanding the determinant along the first column and rearranging the expressions gives
    \begin{equation*} 
      [X Y]_{i j} = 
      \frac{-1}{\minor{1, \ldots, t}{}_{X}}
      \sum_{k = 1}^{t} (-1)^{k} 
      \minor{i, 1, \ldots, \hat{k}, \ldots, t}{}_{X} 
      [X Y]_{k j}
      \in K(B). 
    \end{equation*}

    We have thus proven $\dim R \le d$. 
        The reverse inequality is proven in the appendix as Theorem~\ref{thm:dim-lower-bound} by constructing $d$ elements that are algebraically independent over $K$.
  \end{proof}

    The discussion around~\eqref{eq:local-cohomology-injection} now immediately yields the arithmetic rank of the nullcone ideal in characteristic zero. 
    Recall that the \emph{local cohomological dimension} $\lcd(I)$ of an ideal $I \subsetneq S$ is the largest integer $k$ such that $\HH^{k}_{I}(S) \neq 0$. 

    \begin{corollary}
        Let $K$ be a field of characteristic zero and 
        $\frakU \colonequals I_{t}(X) + I_{t}(Y) + I_{1}(X Y)$ the nullcone ideal for action of $\SL_{t}(K)$ 
        on the polynomial ring
        $S \colonequals K[X_{m \times t}, Y_{t \times n}]$, where $t \le \max(m, n)$. 
        We have
        \begin{equation*}
            \pushQED{\qed}
            \ara \frakU = 
            \lcd \frakU = 
            (m + n - t) t + 1.
            \qedhere
            \popQED
        \end{equation*}
    \end{corollary}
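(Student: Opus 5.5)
The plan is to squeeze $\ara \frakU$ between two bounds that both equal $d \colonequals (m+n-t)t+1$, with $\lcd \frakU$ sitting in between.

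For the upper bound, Remark~\ref{rem:ara-bound-dim} gives $\ara \frakU \le \dim R$. By Theorem~\ref{thm:dim-aux-ring} this equals $d$. In general we also have $\lcd \frakU \le \ara \frakU$, because local cohomology supported at an ideal generated up to radical by $k$ elements can be computed by a \v{C}ech complex of length $k$. It therefore suffices to prove $\HH^{d}_{\frakU}(S) \neq 0$.

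For the lower bound, we use the fact that $R = S^{\SL_t(K)}$. This holds by De Concini--Procesi over infinite fields, and every field of characteristic zero is infinite. Since $\chr K = 0$, the group $\SL_t(K)$ is linearly reductive, so the Reynolds operator gives an $R$-linear splitting $S \to R$ of the inclusion $R \into S$. In particular, $R$ is a direct summand of $S$ as an $R$-module.

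Local cohomology commutes with direct sums, and $\frakU = \frakm S$, where $\frakm$ is the homogeneous maximal ideal of $R$. Hence
\begin{equation*}
  \HH^{d}_{\frakm}(S) \cong \HH^{d}_{\frakm S}(S) = \HH^{d}_{\frakU}(S),
\end{equation*}
and the first module contains $\HH^{d}_{\frakm}(R)$ as a direct summand. This is the injection~\eqref{eq:local-cohomology-injection}. Here we use independence of base for local cohomology along $R \to S$.

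Next, $R$ is a finitely generated graded domain of dimension $d$ and $\frakm$ is its homogeneous maximal ideal. By Grothendieck's nonvanishing theorem, $\HH^{d}_{\frakm}(R) \neq 0$, applied after localizing at $\frakm$, which has height $d$. Therefore $\HH^{d}_{\frakU}(S) \neq 0$, so $\lcd \frakU \ge d$.

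Combining the two bounds gives $d \le \lcd \frakU \le \ara \frakU \le d$.

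The only delicate point is the splitting: the Reynolds operator needs linear reductivity of $\SL_t$ in characteristic zero, and identifying $R$ with the full invariant ring. Both are standard, cited in the introduction via \cite[Proposition~9.12]{24hours}, so no real obstacle is expected. The substantive input, $\dim R = d$, is already Theorem~\ref{thm:dim-aux-ring}.
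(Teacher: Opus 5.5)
Your proposal is correct and is essentially the paper's argument. The upper bound $\ara \frakU \le \dim R = d$ comes from a homogeneous system of parameters of $R$ together with Theorem~\ref{thm:dim-aux-ring}, and the lower bound comes from the Reynolds splitting of $R \into S$, which embeds the nonzero module $\HH^{d}_{\frakm}(R)$ into $\HH^{d}_{\frakU}(S)$, exactly as in the discussion around~\eqref{eq:local-cohomology-injection}. Your explicit chain $d \le \lcd \frakU \le \ara \frakU \le d$ spells out what the paper leaves implicit.
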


    We now show that the above local cohomology obstruction vanishes in positive characteristic. 

    \begin{theorem} \label{thm:small-lcd}
    Let $m$, $n$, $t$ be positive integers with $1 < t < n \le m$. 
    Let $K$ be an algebraically closed field of positive characteristic, and let $X$ and $Y$ be matrices of indeterminates of sizes $m \times t$ and $t \times n$, respectively. 
    Then, the local cohomological dimension of the ideal 
    $\frakU \colonequals I_{t}(X) + I_{t}(Y) + I_{1}(X Y)$ 
    in the polynomial ring $S \colonequals K[X, Y]$ satisfies
    \begin{equation*} 
      \lcd \frakU < (m + n - t)t + 1.
    \end{equation*}
  \end{theorem}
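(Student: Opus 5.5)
In positive characteristic, local cohomology supported in an ideal is controlled by Frobenius. The standard tool is the Peskine--Szpiro / Lyubeznik bound: if $S/\frakU$ is Cohen--Macaulay (or merely has depth at least $k$), then $\HH^{i}_{\frakU}(S) = 0$ for $i > \dim S - k$. More precisely, $\lcd \frakU \le \dim S - \operatorname{depth} S/\frakU$. In fact, if $S/\frakU$ is Cohen--Macaulay, then $\lcd \frakU = \htt \frakU$. I therefore plan to reduce the statement to a depth estimate for $S/\frakU$, or for the pieces of a suitable decomposition of $\VV(\frakU)$. The goal is to show that $\dim S - \operatorname{depth}(S/\frakU)$, or an analogous Mayer--Vietoris bound, is strictly less than $d = (m+n-t)t+1$.

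The first step is to describe the minimal primes of $\frakU$. On $\VV(\frakU)$ we have $\rank X \le t-1$, $\rank Y \le t-1$ and $XY = 0$. These conditions alone cut out the zero set, since $I_{t}(X)$ and $I_{t}(Y)$ vanish exactly when both ranks are below $t$. So $\VV(\frakU)$ is a union of varieties-of-complexes type components
\[
\frakq_{i,j} = I_{i+1}(X) + I_{1}(XY) + I_{j+1}(Y), \qquad i+j \le t,\ i,j \le t-1.
\]
The maximal such ones have $i + j = t$ with $1 \le i,j \le t-1$, together with the additional boundary components. Each $\frakq_{i,j}$ with $i+j=t$ is a $\frakp_{i,j}$ from \eqref{eq:variety-complexes}. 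These are Cohen--Macaulay (Kempf, De Concini--Strickland), and $F$-regular in positive characteristic. Hence, by Peskine--Szpiro, $\lcd \frakp_{i,j} = \htt \frakp_{i,j}$. Their pairwise sums are again of the same type, with smaller ranks, and are also Cohen--Macaulay determinantal-type ideals.

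Next, I would iterate the Mayer--Vietoris sequence for local cohomology over the components, ordered by $i$. This gives
\[
\lcd \frakU \le \max_{k} \bigl( \lcd(\text{$k$-fold intersections}) - k + 1 \bigr),
\]
where intersecting components corresponds to taking sums of ideals. All of these sums are Cohen--Macaulay varieties of complexes, so each local cohomological dimension equals a height. The heights of $\frakp_{i,j}$ are computable, namely $\dim S - \dim \VV(\frakp_{i,j})$. The variety of complexes $\VV(\frakp_{i,t-i})$ has dimension $i(m+t-i) + (t-i)(n+t-(t-i)) - i(t-i)$, to be checked.

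The final step is the numerical check. I need every term in the maximum to be at most $(m+n-t)t$ when $1<t<n\le m$. This computation is where I expect the main obstacle. The heights of the $\frakp_{i,j}$ are roughly $mt+nt - \dim$, and the largest dimension components have $i=1$ or $i=t-1$. The hypotheses $t>1$ and $t<n$ should enter exactly to make the maximum strictly below $d$. If the Mayer--Vietoris bookkeeping becomes messy, my fallback is to show directly that $S/\frakU$ has depth at least $\dim S - d + 1$. I would do this by filtering $S/\frakU$ via the short exact sequences $0 \to S/(\fraka\cap\frakb) \to S/\fraka \oplus S/\frakb \to S/(\fraka+\frakb) \to 0$ and using the Cohen--Macaulayness of the pieces. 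Either route then concludes via $\lcd \frakU \le \dim S - \operatorname{depth} S/\frakU$, which holds in characteristic $p$ by Peskine--Szpiro.
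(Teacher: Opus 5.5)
Your proposal has a genuine gap at the Mayer--Vietoris step. The bound $\lcd \frakU \le \max_{I}\bigl(\lcd(\sum_{i\in I}\frakp_{i,t-i}) - |I| + 1\bigr)$, taken over all nonempty $I\subseteq[t-1]$, is valid; it is what the Mayer--Vietoris spectral sequence gives. You are also right that every such sum is again a Cohen--Macaulay variety of complexes, namely $\sum_{i\in I}\frakp_{i,t-i}=\frakp_{\min I,\,t-\max I}$.

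The problem is that this bound is too coarse. The pair $I=\{1,t-1\}$ contributes $\htt\frakp_{1,1}-1=(m+n-2)(t-1)$. Since $(m+n-t)t-(m+n-2)(t-1)=m+n-(t-1)^2-1$, this term exceeds $(m+n-t)t$ as soon as $m+n\le (t-1)^2$. For example, with $t=5$ and $m=n=6$ the target is $\lcd\frakU<36$, while this single term equals $40$. So the numerical check you flagged as the main obstacle really fails for $t\ge 5$ and $m,n$ close to $t$. A bound that ranges over all subsets of components cannot prove the theorem.

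The missing idea is to add the components one at a time in order of $i$. Each new intersection then collapses to a single prime. Set $\frakU_\ell\colonequals\bigcap_{i\le\ell}\frakp_{i,t-i}$. Because the varieties $\VV(\frakp_{i,t-\ell-1})$ are nested for $i\le\ell$, one has $\VV(\frakU_\ell)\cap\VV(\frakp_{\ell+1,t-\ell-1})=\bigcup_{i\le\ell}\VV(\frakp_{i,t-\ell-1})=\VV(\frakp_{\ell,t-\ell-1})$. Hence $\frakU_\ell+\frakp_{\ell+1,t-\ell-1}$ equals the single prime $\frakp_{\ell,t-\ell-1}$ up to radical, and in fact exactly by \cite[Equation~(4.3.2)]{JPSW}.

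One Mayer--Vietoris step per $\ell$ then gives $\lcd\frakU_{\ell+1}\le\max\{\lcd\frakU_\ell,\ \htt\frakp_{\ell+1,t-\ell-1},\ \htt\frakp_{\ell,t-\ell-1}-1\}$. Only primes with $i+j\in\{t-1,t\}$ ever enter. The check reduces to $(n-t)(t-\ell-1)+(m-\ell-1)(\ell+1)+1>0$ and $(n-t)(t-\ell-1)+(m-\ell-1)\ell+1>0$, which both hold; this is the paper's proof.

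Your depth fallback is the dual of exactly this argument. It works only with the collapsing identity, and there you need it as an equality of ideals, since the depth of $S/(\frakU_\ell+\frakp_{\ell+1,t-\ell-1})$ is not determined by its radical. A minor point: there are no ``additional boundary components''. The minimal primes of $\frakU$ are exactly the $\frakp_{i,t-i}$ with $1\le i\le t-1$.
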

  \begin{proof} 
    We utilise the techniques introduced in the proof of \cite[Theorem~4.2]{HJPS}.
    For nonnegative integers $i$, $j$ with $i + j \le t$, we define
    \begin{equation*} 
      \frakp_{i, j} \colonequals I_{i+1}(X) + I_{j+1}(Y) + I_{1}(XY).
    \end{equation*}
    By \cite{HunekeVoC,ConciniStricklandVoC}, $\frakp_{i, j}$ is a prime ideal of height
    \begin{equation*} 
      \htt \frakp_{i, j} = (m - i)(t - i) + (n - j)(t - j) + i j
    \end{equation*}
    with $S/\frakp_{i, j}$ Cohen--Macaulay.
    In turn, $\htt \frakp_{i, j} = \lcd \frakp_{i, j}$ by \cite[Proposition~III.4.1]{PS74} as we are in positive characteristic. 
    The variety $\VV(\frakp_{i, j})$ consists of pairs of matrices $(A, B)$ satisfying $A B = 0$ and $\rank A \le i$ and $\rank B \le j$. 
    On the other hand, the variety $\VV(\frakU)$ consists of pairs $(A, B)$ satisfying $A B = 0$ and $\rank A < t$ and $\rank B < t$;
    moreover, $A B = 0$ implies the inequality $\rank A + \rank B \le t$. 
    An application of the Nullstellensatz thus gives us
    \begin{equation*} 
      \rad \frakU = \bigcap_{\substack{i + j = t \\ i, j < t}} \frakp_{i, j} = \bigcap_{i = 1}^{t - 1} \frakp_{i, t-i}. 
    \end{equation*}
    For $\ell$ with $1 \le \ell < t$, we define
    \begin{equation*} 
      \frakU_{\ell} \colonequals \bigcap_{i = 1}^{\ell} \frakp_{i, t-i}.
    \end{equation*}
    We induce on $\ell$ to show that 
    \begin{equation*} 
      \lcd \frakU_{\ell} < m t + n t - t^{2} + 1
    \end{equation*} 
    for all $1 \le \ell < t$; 
    this gives us the desired statement since $\rad \frakU = \rad \frakU_{t-1}$. 
    For $\ell = 1$, we need to verify
    \begin{equation*} 
      \lcd \frakp_{1, t-1} = \htt \frakp_{1, t-1} 
      = m t + n - m - t + 1 < m t + n t - t^{2} + 1,
    \end{equation*}
    which holds since $nt-t^2-n+m+t= t(n-t) + (m-n) + t>0$ by our assumption. 
    Assume now that $\ell \ge 1$.
    By \JPSW[Equation~(4.3.2)], we have 
    \begin{equation*} 
      \frakp_{\ell, t-\ell-1} =
      \frakU_{\ell} + \frakp_{\ell+1, t-\ell-1},
    \end{equation*}
    and thus the Mayer--Vietoris sequence for local cohomology
    \begin{equation*} 
      \to\; \HH_{\frakU_{\ell}}^{k}(S) \oplus \HH_{\frakp_{\ell+1, t-\ell-1}}^{k}(S)
      \;\to\; \HH_{\frakU_{\ell+1}}^{k}(S)
      \;\to\; \HH_{\frakp_{\ell, t-\ell-1}}^{k+1}(S) \to
    \end{equation*} 
    yields
    \begin{equation*} 
      \lcd \frakU_{\ell+1} \;\le\; 
      \max\{
        \lcd \frakU_{\ell}, \,
        \lcd \frakp_{\ell+1, t-\ell-1}, \,
        \lcd \frakp_{\ell, t-\ell-1} -1
      \}.
    \end{equation*}
    Thus, as before, it suffices to prove that
    \begin{equation*} 
      \htt \frakp_{\ell+1, t-\ell-1} < m t + n t - t^{2} + 1
      \andd
      \htt \frakp_{\ell, t-\ell-1}-1 < m t + n t - t^{2} + 1. 
    \end{equation*}
    Using the formula for the height as above, these reduce to
    \begin{equation*} 
      (n-t)(t-\ell-1)+(m-\ell-1)(\ell+1)+1>0
      \andd 
      (n-t)(t-\ell-1)+(m-\ell-1)\ell+1>0,
    \end{equation*}
    both of which are true as each parenthetical term is nonnegative by our hypotheses. 
  \end{proof} 

  \begin{theorem} \label{thm:ecd-computation}
    Let $m, n, t$ be positive integers with $n \le m$ and $t \le m$. 
    Let $K$ be an algebraically closed field of characteristic other than two, and $X$ and $Y$ be matrices of indeterminates of sizes $m \times t$ and $t \times n$, respectively. 
    Consider the ideal
    $\frakU \colonequals I_{t}(X) + I_{t}(Y) + I_{1}(X Y)$
        in the polynomial ring $S \colonequals K[X, Y]$
    and the open complement
    \begin{equation*} 
      U \colonequals 
      \left(K^{m \times t} \times K^{t \times n}\right)
      \setminus
      \VV(\frakU).
    \end{equation*}
    If $t < m$ or $t=m=n+1$, then
    \begin{equation} \label{eq:ecd-statement}
      \ecd(U) = 2mt + 2nt - t^{2}.
    \end{equation}
  \end{theorem}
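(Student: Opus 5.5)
Since $U$ is an open subset of the affine space $\AA \colonequals K^{m\times t}\times K^{t\times n}$, it is smooth of dimension $N\colonequals mt+nt$. By Poincar\'e duality~\eqref{eq:PD}, the statement~\eqref{eq:ecd-statement} is therefore equivalent to
\begin{equation*}
  \cp U = t^{2}.
\end{equation*}
I would reduce this to a statement about the nullcone variety $Z\colonequals \VV(\frakU)$. Apply~\eqref{eq:les-triple} to $\AA = U\sqcup Z$. Since $\Hc^{i}(\AA)$ vanishes except for $i=2N$, we get $\Hc^{i}(U)\cong \Hc^{i-1}(Z)$ for all $i<2N-1$. So it suffices to show $\cp Z = t^{2}-1$, which is well below $2N$. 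As a consistency check, the upper bound $\ecd U\le 2mt+2nt-t^2$ also follows from~\eqref{eq:affine-vanishing} together with Remark~\ref{rem:ara-bound-dim} and Theorem~\ref{thm:dim-aux-ring}. The real content is therefore the lower bound, i.e.\ $\Hc^{t^2}(U)\ne 0$ and $\Hc^{i}(U)=0$ for $i<t^2$.

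\emph{The case $t=m=n+1$.} Here $X$ is $t\times t$ and $Y$ is $t\times(t-1)$, so $I_t(Y)=0$ and $Z=\{(A,B): \rank A<t,\ AB=0\}$. This is exactly $\bigsqcup_{r=0}^{t-1}V_r$ with $V_r$ as in Proposition~\ref{prop:cpd-Vr}. The set $V_r$ is open in $V_{\le r}=\{\rank A\le r,\ AB=0\}$, and Proposition~\ref{prop:cpd-Vr} gives $\cp V_r=r^2+2(t-r)(t-1)$. Passing from $r$ to $r+1$ changes this by $2r+1-2(t-1)<0$ for $r\le t-2$, so the sequence is strictly decreasing. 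Lemma~\ref{lem:cpd-locally-closed-union} then gives
\begin{equation*}
  \cp Z=\cp V_{t-1}=(t-1)^2+2(t-1)=t^2-1,
\end{equation*}
as required.

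\emph{The case $t<m$.} I would again stratify $Z$ by $r=\rank X$. For $r\ge 1$ the condition $\rank Y<t$ is automatic, and, as in the proof of Proposition~\ref{prop:cpd-Vr}, the stratum $W_r$ fibres over the rank-$r$ locus of $K^{m\times t}$ with fibre $K^{(t-r)\times n}$. Lemma~\ref{lem:cpdim-fiber}(3b) applies, with the required affine cover supplied by Lemma~\ref{lem:ara-det-inside-det} (this uses $r+1\le t\le m$), and gives $\cp W_r=r^2+2(t-r)n$. The stratum $W_0=\{0\}\times\VV(I_t(Y))$ is handled by \cite{BrunsSchwanzl}.

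These values are not monotone in $r$, and in general their minimum is not $t^2-1$. So the answer is not visible from the strata alone: cancellation in the long exact sequences~\eqref{eq:les-triple} must occur, and Lemma~\ref{lem:cpd-locally-closed-union} cannot be applied directly. This is the main obstacle. To handle it, I would first try to decompose $Z$ along its components $\VV(\frakp_{i,t-i})$, $1\le i\le t-1$, as in the proof of Theorem~\ref{thm:small-lcd}. I would use the compactly supported Mayer--Vietoris sequence (or the closed/open sequences for the successive unions), together with the known compact dimensions of the varieties of complexes and of their pairwise intersections $\VV(\frakp_{i,t-i-1})$ from \JPSW[\S4].

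If that bookkeeping does not close up, the fallback is to reduce to the square base case by a fibration. I would fix the span of the first $t$ rows of $X$ on an open set of $U$ where the $t\times t$ minor $\minor{1,\ldots,t}{}_X$ is not identically constrained, and exhibit a piece of $U$ that fibres, with affine-space or $\GL$ fibres, over the configuration of the case $t=m=n+1$. Lemma~\ref{lem:cpdim-fiber} would then transport the nonvanishing of the critical group $\Hc^{t^2}$ to $U$. Combined with the upper bound above, this would give~\eqref{eq:ecd-statement}.
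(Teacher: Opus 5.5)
Your reduction to $\cp Z=t^{2}-1$ for $Z\colonequals\VV(\frakU)$ matches the paper. So does your treatment of the case $t=m=n+1$ via Proposition~\ref{prop:cpd-Vr} and Lemma~\ref{lem:cpd-locally-closed-union}. The gap is the case $t<m$, which you leave as two possible strategies.

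Your stratification actually covers more of this case than you claim:
\begin{itemize}
\item For $n\ge t$, the closed stratum $W_{0}=\{0\}\times\VV(I_{t}(Y))$ has $\cp W_{0}=t^{2}-1$, by \cite[Lemma~2']{BrunsSchwanzl}, \Poincare\ duality and~\eqref{eq:les-triple}. Meanwhile $\cp W_{1}>\cdots>\cp W_{t-1}=(t-1)^{2}+2n\ge t^{2}+1$. So Lemma~\ref{lem:cpd-locally-closed-union} applied to $Z\setminus W_{0}$, plus one more long exact sequence, gives $\cp Z=t^{2}-1$.
\item For $n=t-1$ the values are monotone, so the lemma applies directly.
\end{itemize}

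The genuine failure is for $n+2\le t<m$. There $\cp W_{n}=t^{2}-(t-n)^{2}<t^{2}-1$, so the strata alone cannot decide the answer, and neither of your routes supplies an argument:
\begin{itemize}
\item The first route needs compact dimensions, and the relevant connecting maps, for the singular varieties of complexes $\VV(\frakp_{i,t-i})$ and their intersections. You do not have these, and the paper does not supply them.
\item The fallback cannot work as stated. Lemma~\ref{lem:cpdim-fiber} would compute $\cp$ of the piece $U^{\circ}\subseteq U$, not of $U$. Nonvanishing of $\Hc^{t^{2}}(U^{\circ})$ does not give $\Hc^{t^{2}}(U)\neq 0$, since $\Hc^{t^{2}}(U^{\circ})\to\Hc^{t^{2}}(U)$ need not be injective.
\end{itemize}

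The missing idea, for $n<t<m$, is to stop stratifying $Z$ and instead cover $U$ itself by two opens, using ordinary cohomology. Since $I_{t}(Y)=0$, we have $U=U_{1}\cup U_{2}$ with $U_{1}=\AA\setminus\VV(I_{t}(X))$ and $U_{2}=\AA\setminus\VV(I_{1}(XY))$. Then:
\begin{itemize}
\item $\ecd U_{1}=2mt-t^{2}$ by \cite[Lemma~2']{BrunsSchwanzl};
\item $\ecd U_{2}\le mn+mt+nt-1$ by~\eqref{eq:affine-vanishing} together with $\ara I_{1}(XY)=mn$ \JPSW[Theorem~4.1];
\item $U_{1}\cap U_{2}=(K^{m\times t}\setminus\VV(I_{t}(X)))\times(K^{t\times n}\setminus\{0\})$ has $\ecd=(2mt-t^{2})+(2nt-1)$.
\end{itemize}
This last value exceeds $\ecd U_{2}$ by at least $(m-t)(t-n)>0$ and exceeds $\ecd U_{1}$ by $2nt-1$. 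Mayer--Vietoris~\eqref{eq:les-mv} then forces $\ecd U=\ecd(U_{1}\cap U_{2})+1=2mt+2nt-t^{2}$.

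For $t\le n$ the paper argues differently from your stratification. It places $Z$ inside $\VV(XY)$, whose complement there is $(\GL(m,t)\times\{0\})\sqcup(\{0\}\times\GL(n,t)^{\tr})$, and uses the rank-one critical groups from \JPSW[Theorem~8.4 and Lemma~8.2].
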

  \begin{proof} 
    For brevity, we set $\AA \colonequals K^{m \times t} \times K^{t \times n}$ to be the affine space corresponding to the polynomial ring $S$. 
    In view of \Poincare\ duality~\eqref{eq:PD}, it suffices to prove that
    \begin{equation} \label{eq:cpd-statement}
      \cp(U) = t^{2}. 
    \end{equation}

    \textbf{Case 1.} We have $t \le n \le m$. 

    Our variety of interest $W \colonequals \VV(\frakU)$ is a closed subset of $V \colonequals \VV(X Y)$, 
    and the complement is the open set $U' \colonequals V \setminus W$ consisting of pairs $(A, B) \in \AA$ satisfying $A B = 0$ and that one of $A$ or $B$ is of rank $t$; 
    however, this then implies that the other is zero. 
    In other words, 
    \begin{align*} 
      U' = 
      (\GL(m, t) \times \{0\})
      \sqcup
      (\{0\} \times \GL(n, t)^{\tr}),
    \end{align*}
    where $\GL(k, t) = \{M \in K^{k \times t} : \rank M = t\}$. 
    We let $U_{1}$ and $U_{2}$ denote the respective components above. 
    By \JPSW[Theorem~8.4 and Lemma~8.2], we have
    \begin{equation*} 
      \cp(V) = \cp(U_{1}) = \cp(U_{2}) = t^{2}
      \andd
      \Hc^{t^{2}}(V) \cong \Hc^{t^{2}}(U_{1}) \cong \Hc^{t^{2}}(U_{2}) \cong \ZZ/2.
    \end{equation*}
    As the $U_{i}$ are open in $U$, we have $\Hc^{\ast}(U) \cong \Hc^{\ast}(U_{1}) \oplus \Hc^{\ast}(U_{2})$,
    giving us $\cp(U') = t^{2}$ and $\Hc^{t^{2}}(U') \cong (\ZZ/2)^{2}$. 
    Analysing the long exact sequence~\eqref{eq:les-triple} for the decomposition $V = W \sqcup U'$,
    we see that
    \begin{equation*} 
      \Hc^{i}(W)
      =
      \begin{cases}
        0 & \text{ if } i < t^{2} - 1, \\
        \ker\left((\ZZ/2)^{2} \to \ZZ/2\right) & \text{ if } i = t^{2}-1.
      \end{cases}
    \end{equation*}
    The displayed kernel must be nonzero, giving us $\cp(W) = t^{2} - 1$. 
    Our hypothesis gives us $\cp \AA = 2(m t + n t) \ge t^{2}$. 
    Analysing~\eqref{eq:les-triple} for the decomposition $\AA = U \sqcup W$ gives us $\cp U = t^{2}$, as desired~\eqref{eq:cpd-statement}. 

    \textbf{Case 2.} We have $n < t < m$. 

    As $\frakU = I_{t}(X) + I_{1}(X Y)$, we see that $U = U_{1} \cup U_{2}$, where $U_{1}$ and $U_{2}$ are the respective complements (in $\AA$) of $\VV(I_{t}(X))$ and $\VV(I_{1}(X Y))$. 
    As topological spaces, we have 
    \begin{equation*} 
      U_{1} = [K^{m \times t} \setminus \VV(I_{t}(X))] \times K^{t \times n},
    \end{equation*}
    giving us $\ecd(U_{1}) = \ecd(K^{m \times t} \setminus \VV(I_{t}(X)))$. 
    By \cite[Lemma~2']{BrunsSchwanzl}, we then obtain
    \begin{equation*} 
      \ecd(U_{1}) = 2 m t - t^{2}.
    \end{equation*} 
    By \JPSW[Theorem~4.1], $\ara(I_{1}(X Y)) = m n$, 
    and thus~\eqref{eq:affine-vanishing} yields
    \begin{equation*} 
      \ecd(U_{2}) \le m n + m t + n t - 1.
    \end{equation*} 
    Note that if $(A, B) \in U_{1} \cap U_{2}$, then $A$ is full rank and $A B \neq 0$.
    As $A$ is full rank, $A B \neq 0$ is equivalent to $B \neq 0$. 
    Thus,
    \begin{align*} 
      U_{1} \cap U_{2} &= 
      \{(A, B) \in \AA : I_{t}(A) \neq 0 \text{ and } A B \neq 0\} \\
      &= 
      \{(A, B) \in \AA : I_{t}(A) \neq 0 \text{ and } B \neq 0\} \\
      &=
      [K^{m \times t} \setminus \VV(I_{t}(X))] \times [K^{t \times n} \setminus \{0\}],
    \end{align*}
    giving us 
    \begin{equation*} 
      \ecd(U_{1} \cap U_{2})
      = \ecd(K^{m \times t} \setminus \VV(I_{t}(X))) + \ecd(K^{t \times n} \setminus \{0\})
      = (2 m t - t^{2}) + (2 n t - 1), 
    \end{equation*}
    where the value of $\ecd(K^{m \times t} \setminus \VV(I_{t}(X)))$ may be found in \cite[Lemma~2']{BrunsSchwanzl}. 
    Comparing the three displayed \etale\ cohomological dimensions, we see that $\ecd(U_{1} \cap U_{2})$ is strictly the largest;
    $\ecd(U_{1} \cap U_{2}) > \ecd(U_{1})$ is clear, 
    and
    \begin{equation*} 
      \ecd(U_{1} \cap U_{2}) - \ecd(U_{2}) \ge (m - t)(t - n) > 0. 
    \end{equation*}
    A straightforward application of Mayer--Vietoris~\eqref{eq:les-mv} then gives the desired equality~\eqref{eq:ecd-statement}
    \begin{equation*} 
      \ecd(U) = \ecd(U_{1} \cap U_{2}) + 1 = 2 m t + 2 n t - t^{2}.
    \end{equation*}

    \textbf{Case 3.} We have $n < t = m$. 

    Our hypothesis then implies $n=t-1$. Set $V \colonequals \VV(\frakU) \subseteq \AA$ and note that we have a decomposition
    $V = \bigsqcup_{r = 0}^{t - 1} V_{r}$, where
    \begin{equation*} 
      V_{r} \colonequals 
      \{
        (A, B) \in K^{t \times t} \times K^{t \times (t - 1)}
        :
        \rank A = r \andd A B = 0
      \}.
    \end{equation*}
    Then, by Proposition~\ref{prop:cpd-Vr}, we have 
    \begin{equation*}
            \cp V_{r} = r^{2} + 2(t - r)(t - 1) = r^{2} - 2(t - 1) r + 2 t (t - 1),
        \end{equation*}
    giving us $\cp V_{0} > \cdots > \cp V_{t-1}$. 
    Moreover, each $V_{j}$ is open in the union $\bigcup_{r=0}^{j} V_{r}$, being the complement of $\VV(I_j(X))$ in the union. 
    We thus obtain $\cp V = \cp V_{t-1}$ by Lemma~\ref{lem:cpd-locally-closed-union}, 
    giving us $\cp V = t^{2} - 1$. 
    On the other hand, $\cp \AA = 2(m t + n t) \ge t^{2}$, so the long exact sequence~\eqref{eq:les-triple} applied to $\AA = V \sqcup U$ yields $\cp U = t^{2}$ as desired~\eqref{eq:cpd-statement}. 
  \end{proof}

  \begin{theorem} \label{thm:ara-SL}
    Let $m, n, t$ be positive integers with $t \le \max(m, n)$. 
    Let $K$ be a field of characteristic other than two, and let $X$ and $Y$ be matrices of indeterminates of sizes $m \times t$ and $t \times n$, respectively. 
    Then, the arithmetic rank of the ideal 
    $\frakU \colonequals I_{t}(X) + I_{t}(Y) + I_{1}(X Y)$ 
    in the polynomial ring $S \colonequals K[X, Y]$ is
    $(m + n - t)t + 1$.
  \end{theorem}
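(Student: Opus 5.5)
The plan is to prove the two inequalities separately. For the upper bound we use Remark~\ref{rem:ara-bound-dim}, which gives $\ara \frakU \le \dim R$, and Theorem~\ref{thm:dim-aux-ring}, which gives $\dim R = (m+n-t)t+1$; this holds over any field. For the lower bound we first pass to the algebraic closure $\overline{K}$. Arithmetic rank can only drop under the faithfully flat extension $S \to S \otimes_K \overline{K}$, and $\frakU$ is extended from $S$, so it suffices to treat $K$ algebraically closed. Transposing, $(X,Y) \mapsto (Y^{\tr}, X^{\tr})$, exchanges the roles of $m$ and $n$ and preserves $\frakU$, so we may assume $n \le m$. The hypothesis $t \le \max(m,n)$ then reads $t \le m$.

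\emph{Main case: $t<m$, or $t=m=n+1$.} Theorem~\ref{thm:ecd-computation} gives $\ecd(U) = 2mt+2nt-t^{2}$. The affine vanishing bound~\eqref{eq:affine-vanishing} reads $\ecd(U) \le \ara\frakU + \dim S - 1$, with $\dim S = mt+nt$. Combining the two gives
\begin{equation*}
  \ara \frakU \;\ge\; 2mt+2nt-t^{2} - (mt+nt) + 1 \;=\; (m+n-t)t+1,
\end{equation*}
as required.

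\emph{Case $m=t$ and $n \le t-2$.} Here we induct downward on $n$, starting from the known case $n = t-1$. Write $\frakU_{n}$ for the nullcone ideal with $n$ columns in $Y$. Adjoining a new column $y$ of $Y$ enlarges the ring by $t$ variables. Since $m = t$, we have $I_t(Y) = 0$ whenever $n < t$, so for $n+1 \le t-1$
\begin{equation*}
  \frakU_{n+1} = \frakU_{n} S' + (\text{the } t \text{ entries of } Xy).
\end{equation*}
Applying Lemma~\ref{lem:one-more-generator} $t$ times gives $\ara \frakU_{n} S' \ge \ara\frakU_{n+1} - t$. Moreover $\ara \frakU_n S' = \ara \frakU_n$: extending to a polynomial ring does not raise arithmetic rank, and specializing the new variables to $0$ does not raise it either. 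Starting from $\ara \frakU_{t-1} = (t-1)t+1$, we obtain $\ara \frakU_n \ge nt+1 = (m+n-t)t+1$ for all $n \le t-1$.

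\emph{Case $m=n=t$ (main obstacle).} The target is $\ara\frakU = t^{2}+1$, and neither reduction above applies. Specializing a column of $Y$ to zero only recovers the bound $(t-1)t+1$, and adding a row to reach the $t<m$ case introduces too many new minors. I would first try to rerun Case~1 of the proof of Theorem~\ref{thm:ecd-computation}, aiming for $\cp(U) = t^{2}$. With $V = \VV(XY)$ we have $U' = V\setminus W = (\GL_t\times\{0\}) \sqcup (\{0\}\times\GL_t)$. Each piece has $\cp = t^{2}$, by the Bruns--Schw\"anzl fiber bundle computation for full-rank matrices, with critical group $\ZZ/2$. The input still needed is $\cp V = t^{2}$ with critical group $\ZZ/2$ for $V = \VV(XY)$ with square $X$ and $Y$, that is, whether \JPSW[Theorem~8.4] covers $m=n=t$. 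If it does, the same long exact sequence argument gives $\cp W = t^{2}-1$, hence $\cp U = t^{2}$, and the lower bound follows exactly as in the main case.

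If \JPSW[Theorem~8.4] does not cover $m=n=t$, I would compute $\cp \VV(XY)$ directly. Stratify by $\rank A = r$ and reuse the bundle structure from Proposition~\ref{prop:cpd-Vr}, now with fiber $\{B : AB=0\} \cong K^{(t-r)\times t}$. Then apply Lemma~\ref{lem:cpd-locally-closed-union}, or a variant of it, to the resulting strata.
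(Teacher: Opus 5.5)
Your proposal is correct and follows the paper's proof. The upper bound comes from Remark~\ref{rem:ara-bound-dim} and Theorem~\ref{thm:dim-aux-ring}. The lower bound comes from Theorem~\ref{thm:ecd-computation} and~\eqref{eq:affine-vanishing} after passing to $\overline{K}$ with $n\le m$, and the case $n+1<t=m$ is reduced to $n=t-1$ by Lemma~\ref{lem:one-more-generator}.

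You are also right that the stated hypothesis of Theorem~\ref{thm:ecd-computation} omits $m=n=t$. The paper tacitly covers that case by Case~1 of that proof, which treats $t\le n\le m$ and cites \JPSW[Theorem~8.4 and Lemma~8.2] on exactly that range. So your first option for the square case is what the paper does, and it goes through.
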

  \begin{proof} 
    Let $d \colonequals (m + n - t)t + 1$, 
    the purported arithmetic rank.  
    In view of Remark~\ref{rem:ara-bound-dim} and Theorem~\ref{thm:dim-aux-ring},
    it suffices to show that $\ara \frakU \ge d$. 
    As the arithmetic rank can only decrease when passing to a field extension, we may assume that $K$ is algebraically closed. 
    Without loss of generality, we may assume that $n \le m$. 
    Setting $U \colonequals (K^{m \times t} \times K^{t \times n}) \setminus \VV(\frakU)$, 
    by~\eqref{eq:affine-vanishing}, we have the desired lower bound whenever we have $\ecd(U) \ge 2 m t + 2 n t - t^{2}$. 
    By Theorem~\ref{thm:ecd-computation}, we have this in all cases except $n+1 < t = m$. 
    Assuming now that we are in this case. 
    We wish to prove that the ideal
    $\frakU = (\det X) + I_{1}(X Y)$ has arithmetic rank $d = m n + 1$. 
    Note that increasing $n$ by one increases both $d$ and the number of minimal generators of $\frakU$ by~$m$. 
    In view of Lemma~\ref{lem:one-more-generator}, we may reduce to the earlier case $n=t-1$ and we are done.
  \end{proof}

    \begin{remark}
        In Corollary~\ref{cor:STgens}, for $\min(m,n)<t\le\max(m,n)$, we construct an explicit minimal set-theoretic generating set of the ideal $I_t(X)+I_t(Y)+I_1(XY)$.
        The construction works over any base ring and in particular, we obtain the arithmetic rank of the corresponding ideal in the ring $\ZZ[X, Y]$.
    \end{remark}

\section{The orthogonal and symplectic groups}
\label{sec:OSp}

  We now note that for the orthogonal and symplectic groups, the case with the additional copies of the dual representation reduces to the case of just copies of the regular representation.
    In what follows, we use the convention that $\binom{n}{2}=0$ if $n < 0$.

  \subsection{The orthogonal group}
  
    Consider the action of the orthogonal group $G \colonequals \OO_{t}(K)$ on the polynomial ring $S \colonequals K[X_{m \times t}, Y_{t \times n}]$, where $t, m, n \ge 1$ are integers. 
    We consider the new polynomial ring $R \colonequals K[Z_{t \times (m + n)}]$ with the action of $M \in G$ given by 
    $M \colon Z \mapsto M Z$. 
    As $M^{-1} = M^{\tr}$ for $M \in G$, we see that there is a degree-preserving $G$-equivariant $K$-algebra isomorphism $\varphi \colon R \to S$ given by
    $Z \mapsto \begin{bmatrix} X^{\tr} & Y \end{bmatrix}$;
        recall that $G$-equivariance means that
        $\varphi(M(f)) = M(\varphi(f))$ for all $M \in G$ and $f \in R$. 
        Note that under this map, we have
        \begin{equation*}
            Z^{\tr} Z
            \mapsto
            \begin{bmatrix}
                X X^{\tr} & X Y \\
                (X Y)^{\tr} & Y^{\tr} Y
            \end{bmatrix}.
        \end{equation*}
    This gives us a commutative diagram
    \begin{equation*} 
      \begin{tikzcd}
      {K[Z]} \arrow[r, "\cong"] & {K[X, Y]} \\
      {K[Z^{\tr} Z]} \arrow[r, "\cong"] \arrow[u, hook] & {K[X X^{\tr}, Y^{\tr} Y, X Y]}  \arrow[u, hook]
      \end{tikzcd}
    \end{equation*}
    of graded $K G$-modules. 
    When $K$ is infinite and of characteristic other than two, 
    it is known \cite[Theorem~5.6]{DeConciniProcesi76} that $K[Z]^{G} = K[Z^{\tr} Z]$, and as $\varphi$ is $G$-equivariant, we have the equality $K[X, Y]^{G} = K[X X^{\tr}, Y^{\tr} Y, X Y]$. 

    The arithmetic rank of the nullcone ideal now follows from \JPSW[Theorem~5.1]:

    \begin{theorem}
        \label{thm:ara-O}
      Let $K$ be a field of characteristic other than two, and let $X$ and $Y$ be matrices of indeterminates of sizes $m \times t$ and $t \times n$, respectively. 
      Then, the arithmetic rank of the ideal 
      $I_{1}(X X^{\tr}) + I_{1}(Y^{\tr} Y) + I_{1}(X Y)$
      in $K[X, Y]$ is
      \begin{equation*} 
        \pushQED{\qed} 
        \binom{n+m+1}{2} - \binom{n+m+1-t}{2}.
        \qedhere \popQED 
      \end{equation*}
    \end{theorem}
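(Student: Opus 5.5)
The plan is to transport the problem along the isomorphism $\varphi \colon K[Z_{t \times (m+n)}] \to K[X, Y]$ given by $Z \mapsto \begin{bmatrix} X^{\tr} & Y \end{bmatrix}$, and then quote \JPSW[Theorem~5.1]. Arithmetic rank is preserved under ring isomorphisms, since $\varphi$ carries radicals to radicals and generators to generators. So it suffices to identify the image of the relevant ideal of $K[Z]$.

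From the displayed block formula, $\varphi$ sends $Z^{\tr} Z$ to the matrix with blocks $XX^{\tr}$, $XY$, $(XY)^{\tr}$, $Y^{\tr}Y$. Hence $\varphi$ maps the ideal $I_1(Z^{\tr} Z)$ onto $I_1(XX^{\tr}) + I_1(Y^{\tr}Y) + I_1(XY)$. This identification is purely formal: it holds over any field and does not need the invariant-theoretic statement $K[Z]^G = K[Z^{\tr}Z]$. That statement is only what makes this ideal the nullcone.

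Now \JPSW[Theorem~5.1] computes the arithmetic rank of $I_1(Z^{\tr}Z)$ for a $t \times N$ matrix of indeterminates over a field of characteristic not two. Taking $N = m+n$, that theorem gives
\begin{equation*}
  \ara I_1(Z^{\tr}Z) = \binom{N+1}{2} - \binom{N+1-t}{2}.
\end{equation*}
Substituting $N = m+n$ yields the stated value.

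Two points need checking, and this is the only potential obstacle. First, the hypotheses of \JPSW[Theorem~5.1] should match ours: characteristic other than two, with no restriction on the field being infinite or algebraically closed. If that theorem needs such a restriction, I would pass to the algebraic closure, where the upper bound from set-theoretic generators persists and the lower bound comes from étale cohomology over $\overline{K}$. Second, the binomial convention $\binom{k}{2} = 0$ for $k < 0$ should agree with the case $t > N$ there, where the answer is $\binom{N+1}{2}$, the total number of entries of the symmetric matrix $Z^{\tr}Z$.
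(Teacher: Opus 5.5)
Your proposal is correct and follows the paper's proof: the paper likewise identifies $I_{1}(XX^{\tr}) + I_{1}(Y^{\tr}Y) + I_{1}(XY)$ as the image of $I_{1}(Z^{\tr}Z)$ under the isomorphism $Z \mapsto \begin{bmatrix} X^{\tr} & Y \end{bmatrix}$, and then applies \JPSW[Theorem~5.1] to a $t \times (m+n)$ matrix. You are right that this identification is purely formal, since the invariant theory only explains why this ideal is the nullcone, and your fallback of passing to $\overline{K}$ is not needed because the paper applies the cited theorem directly over any field of characteristic other than two.
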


  \subsection{The symplectic group}
    
    Consider the action of the symplectic group $G \colonequals \Sp_{2t}(K)$ on the polynomial ring $S \colonequals K[X_{m \times 2t}, Y_{2t \times n}]$, where $t, m, n \ge 1$ are integers. 
    We consider the new polynomial ring $R \colonequals K[Z_{2t \times (m + n)}]$ with the action of $M \in G$ given by 
    $M \colon Z \mapsto M Z$. 
    As $M^{-1}=\Omega^{-1}M^{\tr}\Omega$, we have a degree-preserving $G$-equivariant $K$-algebra isomorphism $\varphi \colon R \to S$ given by
    $Z \mapsto \begin{bmatrix} \Omega X^{\tr} & Y \end{bmatrix}$, 
    giving us the diagram
    \begin{equation*} 
      \begin{tikzcd}
           {K[Z]} \arrow[r, "\cong"] & {K[X, Y]} \\
           {K[Z^{\tr} \Omega Z]} \arrow[r, "\cong"] \arrow[u, hook] & {K[X \Omega X^{\tr}, Y^{\tr} \Omega Y, X Y]} \arrow[u, hook]
      \end{tikzcd},
    \end{equation*}
    where the lower rings are the invariant subrings when $K$ is infinite; for the left inclusion, this is due to \cite[Theorem~6.6]{DeConciniProcesi76}, and the right follows from $G$-equivariance as before. 

    The arithmetic rank of the nullcone ideal now follows from \JPSW[Theorem~3.1]:

    \begin{theorem}
        \label{thm:ara-Sp}
      Let $K$ be a field of characteristic other than two, and let $X$ and $Y$ be matrices of indeterminates of sizes $m \times 2t$ and $2t \times n$, respectively. 
      Then, the arithmetic rank of the ideal 
      $I_{1}(X \Omega X^{\tr}) + I_{1}(Y^{\tr} \Omega Y) + I_{1}(X Y)$
      in $K[X, Y]$ is
      \begin{equation*} 
        \pushQED{\qed} 
        \binom{n+m}{2} - \binom{n+m-2t}{2}.
        \qedhere \popQED 
      \end{equation*}
    \end{theorem}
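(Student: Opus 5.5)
The plan is to transport the problem along the $G$-equivariant isomorphism $\varphi \colon K[Z_{2t \times (m+n)}] \to K[X, Y]$, $Z \mapsto \begin{bmatrix} \Omega X^{\tr} & Y \end{bmatrix}$, and then quote \JPSW[Theorem~3.1]. Arithmetic rank is invariant under ring isomorphisms, since $\varphi$ carries radicals to radicals and generating sets to generating sets. So it suffices to identify the preimage of the ideal
\begin{equation*}
  I_{1}(X \Omega X^{\tr}) + I_{1}(Y^{\tr} \Omega Y) + I_{1}(X Y)
\end{equation*}
with $I_{1}(Z^{\tr} \Omega Z)$. The latter is exactly the ideal whose arithmetic rank is computed in \JPSW[Theorem~3.1] for $2t \times N$ matrices, with $N = m+n$.

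The key computation is the block form of $\varphi(Z^{\tr} \Omega Z)$. Write $Z = \begin{bmatrix} Z_{1} & Z_{2} \end{bmatrix}$ with $Z_{1} \mapsto \Omega X^{\tr}$ and $Z_{2} \mapsto Y$. Using $\Omega^{\tr} = -\Omega$ and $\Omega^{2} = -\boldone_{2t}$, the blocks are as follows:
\begin{itemize}
  \item $Z_{1}^{\tr} \Omega Z_{1} \mapsto X \Omega^{\tr} \Omega \Omega X^{\tr} = -X \Omega^{2} \Omega X^{\tr} = X \Omega X^{\tr}$;
  \item $Z_{1}^{\tr} \Omega Z_{2} \mapsto X \Omega^{\tr} \Omega Y = X Y$;
  \item $Z_{2}^{\tr} \Omega Z_{2} \mapsto Y^{\tr} \Omega Y$;
  \item the remaining block $Z_{2}^{\tr}\Omega Z_{1}$ is, up to sign, the transpose of $Z_{1}^{\tr}\Omega Z_{2}$.
\end{itemize}
Hence $\varphi(I_{1}(Z^{\tr}\Omega Z))$ is generated by the entries of $X\Omega X^{\tr}$, $XY$ and $Y^{\tr}\Omega Y$, so $\varphi$ carries $I_{1}(Z^{\tr}\Omega Z)$ onto the ideal in the statement.

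Next I would apply \JPSW[Theorem~3.1], which gives $\ara I_{1}(Z^{\tr}\Omega Z) = \binom{N}{2} - \binom{N-2t}{2}$ for $N = m+n$, over a field of characteristic not two. This yields the stated formula, with the convention $\binom{k}{2} = 0$ for $k < 0$ covering the case $N < 2t$. If that theorem is only stated over algebraically closed or infinite fields, the two bounds behave differently under base change:
\begin{itemize}
  \item the upper bound comes from explicit generators; in the ASL/system-of-parameters argument these are defined over the prime field, so it holds over any field;
  \item the lower bound passes down from $\overline{K}$, because arithmetic rank can only drop under field extension.
\end{itemize}
Together these give the result over arbitrary $K$ of characteristic not two.

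The main obstacle is only bookkeeping: matching the exact normalization in \JPSW[Theorem~3.1], that is, which side $\Omega$ sits on, the matrix shape $2t\times N$, and whether the count is stated as $\binom{N}{2}-\binom{N-2t}{2}$. I also need to check that their hypotheses allow all $N$, including $N<2t$, where the ideal is the whole $I_{1}(Z^{\tr}\Omega Z)$ and the formula reduces to $\binom{N}{2}$. No new cohomological input is needed.
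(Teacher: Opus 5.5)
Your proposal is correct and is essentially the paper's argument. The paper likewise uses the $G$-equivariant isomorphism $K[Z_{2t \times (m+n)}] \to K[X,Y]$, $Z \mapsto \begin{bmatrix} \Omega X^{\tr} & Y \end{bmatrix}$, to identify the ideal with $I_{1}(Z^{\tr}\Omega Z)$, then quotes \JPSW[Theorem~3.1]; your explicit block computation of $\varphi(Z^{\tr}\Omega Z)$ (left implicit in the paper's commutative diagram) and your remark about descending the lower bound from $\overline{K}$ are correct supplementary details.
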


    \begin{remark}
        We refer the reader to \JPSW[\S3 and \S5] to note that the above arithmetic ranks agree with the dimension of the corresponding invariant ring, and that the local cohomology obstructions vanish in positive characteristic. 
    \end{remark}

\appendix \section{A lower bound on the dimension of the invariant ring} \label{sec:Appendix}

  Given integers $m$, $n$, $t$ such that $t\le\max(m,n)$, set $S\colonequals K[X_{m \times t},\, Y_{t \times n}]$, recall the invariant subring $R=K[\Delta_{t}(X),\, \Delta_{t}(Y),\, X Y]$ of the $\SL_t(K)$-action on $S$ from Section~\ref{sec:SL}. 
  In this appendix, we produce a proof of a lower bound on the dimension of $R$. 
  An analogous result was proven in \cite{LakshmibaiShukla} for $t\le\min(m,n)$.  

  Let $q \colonequals \min(m,n,t)$. 
  Note that for $r>q$, all $r$-minors of $XY$ vanish. 
  Define 
  \begin{equation*} 
    B:=\Delta_t(X) \cup \Delta_t(Y) \cup \bigcup_{i=1}^{t}\Delta_i(XY).
  \end{equation*} 
  Let $1\le a_1<\cdots<a_t$ and $1\le b_1<\cdots<b_t$ be sequences of integers.
  We use the notations 
  $\minor{a_1,\ldots,a_t}{}_{X}$, 
  $\minor{b_1,\ldots,b_t}{}_{Y}$, and 
  $\minor{a_1,\ldots,a_i}{b_1,\ldots,b_i}$ 
  to denote the corresponding minors of $X$, $Y$, and $XY$ respectively.

  We introduce a partial order on $B$ as the transitive closure of the following relations: 

  \begin{itemize}
    \item $\minor{a_1,\ldots,a_t}{}_{X} \le \minor{b_1,\ldots,b_t}{}_{X}$ if $a_i\le b_i$ for all $i\in [t]$.
    \item $\minor{a_1,\ldots,a_t}{}_{Y} \le \minor{b_1,\ldots,b_t}{}_{Y}$ if $a_i\le b_i$ for all $i\in [t]$.
    \item $\minor{a_1,\ldots,a_r}{b_1,\ldots,b_r} \le \minor{c_1,\ldots,c_s}{d_1,\ldots,d_s}$ if $r \ge s$ and $a_i\le c_i$ and $b_i\le d_i$ for all $i\in[s]$. 
    \item $\minor{c_1,\ldots,c_t}{}_{X} \le \minor{a_1,\ldots,a_r}{b_1,\ldots,b_r}$ if $c_i\le a_i$ for all $i \in [r]$.
    \item $\minor{d_1,\ldots,d_t}{}_{Y} \le \minor{a_1,\ldots,a_r}{b_1,\ldots,b_r}$ if $d_i\le b_i$ for all $i \in [r]$.
  \end{itemize}

  The elements of $\Delta_t(X)$ and $\Delta_t(Y)$ are incomparable and form two branches at the
  bottom of the poset, while the minors of $XY$ lie above them. 
  In particular, a chain cannot contain both a maximal minor of $X$ and a maximal minor of $Y$. 
  We call a product of elements of $B$ a \emph{standard monomial} if its factors, with repetition allowed, form a chain in $B$.

  \begin{example}\label{eg:poset}
    Suppose that $m=3$, $t=2$, and $n=2$.  The Hasse diagram of $B$ is:
    \begin{equation*} \label{eq:poset-example}
      \begin{tikzcd}[row sep=1.7em, column sep=1.6em]
      && {\minor{3}{2}} \\
      & {\minor{2}{2}} && {\minor{3}{1}} \\
      {\minor{1}{2}} && {\minor{2}{1}} \\
      & {\minor{1}{1}} && {\minor{2,3}{1,2}} \\
      && {\minor{1,3}{1,2}} & {\minor{2,3}{}_X} \\
      && {\minor{1,2}{1,2}} & {\minor{1,3}{}_X} \\
      & {\minor{1,2}{}_Y} && {\minor{1,2}{}_X}
      \arrow[no head, from=7-2, to=6-3]
      \arrow[no head, from=7-4, to=6-3]
      \arrow[no head, from=7-4, to=6-4]
      \arrow[no head, from=6-3, to=5-3]
      \arrow[no head, from=6-4, to=5-3]
      \arrow[no head, from=6-4, to=5-4]
      \arrow[no head, from=5-3, to=4-2]
      \arrow[no head, from=5-3, to=4-4]
      \arrow[no head, from=5-4, to=4-4]
      \arrow[no head, from=4-2, to=3-1]
      \arrow[no head, from=4-2, to=3-3]
      \arrow[no head, from=4-4, to=3-3]
      \arrow[no head, from=3-1, to=2-2]
      \arrow[no head, from=3-3, to=2-2]
      \arrow[no head, from=3-3, to=2-4]
      \arrow[no head, from=2-2, to=1-3]
      \arrow[no head, from=2-4, to=1-3]
      \end{tikzcd}
    \end{equation*}
    Any maximal chain has seven elements, in agreement with
    $mt+nt-t^2+1=7$.
  \end{example}

  Observe that $B$ is a ranked poset because it is composed of ranked posets on each of $\Delta_t(X)$, $\Delta_t(Y)$, and $\Delta_r(XY)$ for $r \in [t]$. We record the rank of the poset $B$ for later use. 
  As before, the \emph{rank} of a poset refers to the longest length of a chain.

  \begin{lemma}\label{lem:poset_rank}
    The rank of $B$ is $mt+nt-t^2+1$.
  \end{lemma}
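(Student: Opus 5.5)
Since the paper has just observed that $B$ is a ranked poset, I would avoid comparing arbitrary chains. The plan is to compute the length of one explicit maximal chain and, as a safeguard, back this up with a rank function. Throughout, ``rank'' counts the elements of a longest chain, as in Example~\ref{eg:poset}.

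\textbf{Step 1: structure of a chain.} A chain cannot contain both a maximal minor of $X$ and one of $Y$. So every maximal chain consists of an initial segment in $\Delta_t(X)$ (or in $\Delta_t(Y)$), followed by a segment in $\bigcup_{r\le q}\Delta_r(XY)$. I would assume without loss of generality that $n\le m$ and run the chain through the $X$-branch; in the opposite case $\Delta_t(X)$ may be empty and the argument is symmetric.

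\textbf{Step 2: the explicit chain.} I would build the chain in three pieces.
\begin{itemize}
\item \emph{The $X$-segment.} Start at $\minor{1,\ldots,t}{}_{X}$ and raise one row index by one at each step until reaching $\minor{m-t+1,\ldots,m}{}_{X}$. This segment has $t(m-t)+1$ elements.
\item \emph{The transition.} Step into $\Delta_q(XY)$ at the smallest minor $\minor{m-q+1,\ldots,m}{1,\ldots,q}$ allowed by the relation $c_i\le a_i$. The row part is already maximal there.
\item \emph{The $XY$-segment.} Alternately increase column indices one at a time, and drop the size of the minor by deleting an index, so that each move increases $\sum_i a_i+\sum_i b_i$ minimally. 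Continue until reaching the maximum element $\minor{m}{n}$.
\end{itemize}
I would then count the elements of the $XY$-segment and check that, together with the $X$-segment, the total is $mt+nt-t^2+1$. The example with $m=3$, $n=t=2$, which gives $3+4=7$, serves as a sanity check. If instead one enters the $XY$-part low, at $\minor{1,\ldots,q}{1,\ldots,q}$, the counts should redistribute and give the same total, which is consistent with $B$ being ranked.

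\textbf{Step 3: rank function.} To be safe about the claim that every maximal chain has the same length, I would define an explicit function $\rho\colon B\to\ZZ$. Set $\rho(\minor{c}{}_X)=\sum_i(c_i-i)$, define $\rho$ on $\Delta_t(Y)$ symmetrically, and let $\rho$ on $\Delta_r(XY)$ be $\sum_{i\le r}(a_i-i)+\sum_{i\le r}(b_i-i)$ plus a size-dependent offset $\gamma_r$. The offsets $\gamma_r$ are chosen so that every covering relation raises $\rho$ by exactly one. Then the rank equals $\rho(\minor{m}{n})-\rho(\text{minimal element})+1$, and it remains to check that this equals $mt+nt-t^2+1$.

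\textbf{Main obstacle.} The hardest part is calibrating the offsets $\gamma_r$ at the two kinds of transitions: from $\Delta_t(X)$ into $\Delta_q(XY)$, and from $\Delta_r(XY)$ down to $\Delta_{r-1}(XY)$. These transitions depend on the case $t\le\min(m,n)$ versus $q<t$, where the $XY$-minors stop at size $q$. I would first determine the covering relations at these transitions directly from the transitive-closure definition, working them out in small cases such as the example, and then fix the $\gamma_r$ to match.
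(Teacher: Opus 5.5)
There is a genuine gap: the explicit chain in your Step~2 is wrong when $q=n<t$, and Step~3 leaves its only substantive computation undone.

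\textbf{The Step~2 transition.} The relation $\minor{c_1,\ldots,c_t}{}_{X}\le\minor{a_1,\ldots,a_r}{b_1,\ldots,b_r}$ constrains only the first $r$ row indices. So the smallest element of $\Delta_q(XY)$ above $\minor{m-t+1,\ldots,m}{}_{X}$ is $\minor{m-t+1,\ldots,m-t+q}{1,\ldots,q}$, not $\minor{m-q+1,\ldots,m}{1,\ldots,q}$; the two agree only when $q=t$. Your claim that ``the row part is already maximal there'' is exactly what fails. The jump skips $n(t-n)$ elements, and your chain has $mt-t^2+n^2+1$ elements, so the count you postpone would not give $mt+nt-t^2+1$. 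The smallest instance is $m=3$, $t=2$, $n=1$. Your chain $\minor{1,2}{}_{X}<\minor{1,3}{}_{X}<\minor{2,3}{}_{X}<\minor{3}{1}$ has four elements, whereas $\minor{1,2}{}_{X}<\minor{1,3}{}_{X}<\minor{2,3}{}_{X}<\minor{2}{1}<\minor{3}{1}$ has the required five.

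For $q=t$ your transition is a genuine cover and the total is correct. Entering low at $\minor{1,\ldots,n}{1,\ldots,n}$ is saturated only if you arrive from $\minor{1,\ldots,n,m-t+n+1,\ldots,m}{}_{X}$ rather than from $\minor{1,\ldots,t}{}_{X}$. That is precisely the paper's chain, built downward from $\minor{m}{n}$, whose final $X$-segment has $(t-n)(m-t)+1$ elements.

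\textbf{Step~3.} This is the right tool, and if completed it would actually prove the rankedness that the paper's proof simply asserts. As written, though, it defers the only nontrivial work. The covering relations you need are:
\begin{itemize}
\item Within $\Delta_t(X)$, $\Delta_t(Y)$, or a fixed $\Delta_r(XY)$: raise one index by one.
\item Between sizes: only $\minor{a_1,\ldots,a_{r-1},m}{b_1,\ldots,b_{r-1},n}\lessdot\minor{a_1,\ldots,a_{r-1}}{b_1,\ldots,b_{r-1}}$. So after each size drop the row indices must be raised again, not only the column indices as your Step~2 description suggests. This cover forces $\gamma_{r-1}=\gamma_r+m+n-2r+1$.
\item Out of the branches, when $q=t$: only $\minor{c}{}_{X}\lessdot\minor{c}{1,\ldots,t}$ and $\minor{d}{}_{Y}\lessdot\minor{1,\ldots,t}{d}$.
\item Out of the branch, when $q=n<t$: only $\minor{c_1,\ldots,c_n,m-t+n+1,\ldots,m}{}_{X}\lessdot\minor{c_1,\ldots,c_n}{1,\ldots,n}$.
\end{itemize}
For a target of size $r<q$ there is never a cover, since $\minor{c_1,\ldots,c_{r+1}}{1,\ldots,r+1}$ lies strictly between. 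The last two items force $\gamma_t=1$ and $\gamma_n=(t-n)(m-t)+1$, respectively. The minimal elements $\minor{1,\ldots,t}{}_{X}$ (and $\minor{1,\ldots,t}{}_{Y}$) have $\rho=0$, and $\minor{m}{n}$ is the unique maximum. Hence every maximal chain has $\rho(\minor{m}{n})+1=mt+nt-t^2+1$ elements in both cases. If you instead calibrated $\gamma_n$ from your Step~2 transition, it would come out too small by $n(t-n)$ and reproduce the same error.
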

  \begin{proof}
    Without loss of generality, we may assume $m\ge t$, so that $\Delta_t(X)$ is nonempty. Let $q \colonequals \min(n,t)$ as before.
    We construct a maximal chain in $B$, starting with the largest element of $B$. 
    As $B$ is a ranked poset, the length of such a maximal chain will determine the rank. 
    
    We begin with the $n+m-1$ elements 
    \begin{equation*} 
      \minor{m}{n},\minor{m}{n-1},\ldots,\minor{m}{1},\, \minor{m-1}{1},\ldots,\minor{1}{1}.
    \end{equation*}
    Moving on to the $2$-minors of $XY$, the next element in the chain is $\minor{1,m}{1,n}$. 
    We proceed from $\minor{1,m}{1,n}$ to $\minor{1,2}{1,2}$ as before: by first decreasing the second $X$-index to $2$ while keeping the $Y$-indices constant, and then vice versa. 
    This gives us a chain segment of length $m+n-3$. 
    Proceeding similarly for $3 \le r \le q$, we obtain chain segments of length $m+n-(2r-1)$ for each $r$. 
    The smallest element in our chain so far is $\minor{1,\ldots,q}{1\ldots,q}$.

    \textbf{Case 1.} We have $q = t \le n$.

    In this case, the next and final element in our chain must be $\minor{1,\ldots,q}{}_X$. The total length of the chain is thus 
    \begin{equation*}
        1+\sum_{r=1}^t (m+n-2r+1)=mt+nt-t^2+1.
    \end{equation*}

    \textbf{Case 2.} We have $q = n < t$.

    We can continue the chain with the element $\minor{1,\ldots,n,m-(t-n)+1,\ldots,m}{}_X$. 
    Descending through the poset structure to $\minor{1,\ldots,t}{}_X$ decreases the sum of the indices by exactly one at each step,
    giving us a chain segment of length $(t-n)(m-t)+1$. 
    The total length of the chain is thus
    \begin{equation*}
        (t-n)(m-t)+1+\sum_{r=1}^n (m+n-2r+1)=mt+nt-t^2+1. \qedhere
    \end{equation*}
  \end{proof}
    
  We next associate a pair of tableaux $(T_{X}, T_{Y})$ to a standard monomial. 
  Write the factors of a given standard monomial
  in weakly increasing order in $B$. 
  The factor $\minor{i_1,\ldots,i_t}{}_{X}$ contributes
  the row $(i_1,\ldots,i_t)$ to the $X$-tableau $T_{X}$, 
  the factor  $\minor{j_1,\ldots,j_t}{}_{Y}$ contributes 
  $(j_1,\ldots,j_t)$ to the $Y$-tableau $T_{Y}$, 
  and the factor $\minor{a_1,\ldots,a_r}{b_1,\ldots,b_r}$ contributes
  $(a_1,\ldots,a_r)$ to $T_{X}$ and 
  $(b_1,\ldots,b_r)$ to $T_{Y}$. 
  Empty rows are omitted. 

  For example, the standard monomial
  \begin{equation} \label{eq:tableaux-example}
    \minor{1,2}{}_{X} \, \minor{1,3}{}_{X} \, \minor{1,3}{1,2} \, \minor{1}{1} \, \minor{2}{1}^2
  \end{equation}
  corresponds to the pair $(T_{X}, T_{Y})$ displayed below, where we draw the tableaux by aligning their bottom rows.  
  \begin{equation} \label{eq:tableaux-correspondence}
    \begin{minipage}[b][][]{0.4\textwidth}
      \flushright
      \begin{ytableau}
        1 & 2 \\
        1 & 3 \\
        1 & 3 \\
        1 \\
        2 \\
        2
      \end{ytableau} 
    \end{minipage}
    \quad
    \begin{minipage}[b][][]{0.4\textwidth}
      \flushleft
      \begin{ytableau}
      1 & 2 \\
      1 \\
      1 \\
      1
      \end{ytableau}
    \end{minipage}%
  \end{equation}

  The definition of the order shows that the row lengths weakly decrease, the entries in each row
  are strictly increasing, and the entries in each column are weakly increasing.
  Thus, a standard monomial gives a pair $(T_X,T_Y)$ of semistandard Young
  tableaux. 
  Conversely, such a pair recovers the factors of the standard monomial by proceeding upward in the diagram.

  We begin by showing that the standard monomials in $B$ are linearly independent over $K$. 
  To do so, equip $S$ with the lexicographic monomial order induced by
  \begin{equation*} \label{eq:monomial-order}
   x_{11}>x_{12}>\cdots>x_{1t}>x_{21}>\cdots>x_{mt}
   >y_{11}>y_{12}>\cdots>y_{1n}>y_{21}>\cdots>y_{tn}.
  \end{equation*}
  For $a_1<\cdots<a_t$ and $b_1<\cdots<b_t$, the initial monomials are given as
  \begin{align}
    \init(\minor{a_1,\ldots,a_t}{}_{X})
    &=\prod_{i=1}^t x_{a_{i} i}, \label{eq:initial-X}\\
    \init(\minor{b_1,\ldots,b_t}{}_{Y})
    &=\prod_{i=1}^t y_{ib_i}, \label{eq:initial-Y}\\
    \init(\minor{a_1,\ldots,a_r}{b_1,\ldots,b_r})
    &=\prod_{i=1}^r x_{a_i i}y_{ib_i}. \label{eq:initial-XY}
  \end{align}
  The first two equalities follow by expanding the relevant determinants.  For the  third, apply the Cauchy--Binet formula to the indicated minor of $XY$. 
  The largest summand is the one using columns $1,\ldots,r$ of $X$ and rows
  $1,\ldots,r$ of $Y$, and the diagonal term is largest in each of the two
  resulting minors. This gives~\eqref{eq:initial-XY}.

  Let us illustrate the reconstruction that will be used in the proof below.
  For the standard monomial in~\eqref{eq:tableaux-example},
  equations~\eqref{eq:initial-X}--\eqref{eq:initial-XY} give
  \begin{equation*}
    \init\big(\minor{1,2}{}_{X} \, \minor{1,3}{}_{X} \, \minor{1,3}{1,2} \, \minor{1}{1} \, \minor{2}{1}^2\big)
    \;=\;
    x_{11}^{4} x_{21}^{2} x_{22}^{\ph} x_{32}^{2} y_{11}^{4} y_{22}^{\ph}.
  \end{equation*}
  The variables of the form $x_{i1}$ give the first column of the 
  $X$-tableau: 
  the factor $x_{11}^{4} x_{21}^{2}$ gives the multiset $\{1,1,1,1,2,2\}$. 
  Similarly, $x_{22}^{\ph} x_{32}^{2}$ gives the multiset $\{2,3,3\}$ in its second column. 
  Arranging these multisets in weakly increasing order recovers $T_{X}$ from~\eqref{eq:tableaux-correspondence}. 
  On the other hand, the variables of the form $y_{ij}$ give the entries in the $i$-th column of the $Y$-tableau: $y_{11}^{4}$ gives the multiset $\{1,1,1,1\}$ in the first column and $y_{22}$ gives the single entry $2$ in the second column,
  recovering $T_{Y}$ from~\eqref{eq:tableaux-correspondence}. 
  We now show that this procedure may be used in general to recover the pair of tableaux from the initial monomial of a standard monomial. 

  \begin{lemma} \label{lem:standard-independent}
  Distinct standard monomials have distinct initial monomials. 
  Consequently, the standard monomials are $K$-linearly independent in $S$.
  \end{lemma}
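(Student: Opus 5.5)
The strategy is to show that the map (standard monomial) $\mapsto$ (pair $(T_X,T_Y)$) $\mapsto$ initial monomial is injective. We do this by reconstructing $(T_X,T_Y)$ from the initial monomial, exactly as in the worked example. Since the map from standard monomials to pairs of semistandard tableaux is already injective (factors are recovered by reading rows upward), it suffices to recover the pair from the initial monomial. Linear independence then follows formally. Given a nontrivial $K$-linear relation among distinct standard monomials, the largest initial monomial occurring among them appears exactly once, so it cannot cancel; this contradicts the relation.

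First, by multiplicativity of initial monomials and \eqref{eq:initial-X}--\eqref{eq:initial-XY}, the initial monomial of a standard monomial is
\begin{equation*}
  \prod_{\text{rows } (i_1,\ldots,i_k) \text{ of } T_X} \prod_{c=1}^{k} x_{i_c c} \cdot \prod_{\text{rows } (j_1,\ldots,j_\ell) \text{ of } T_Y} \prod_{c=1}^{\ell} y_{c j_c}.
\end{equation*}
Here a maximal $X$-minor contributes a full row of length $t$ to $T_X$ and nothing to $T_Y$, a maximal $Y$-minor contributes only to $T_Y$, and an $r$-minor of $XY$ contributes rows of length $r$ to both. Hence the exponent of $x_{ac}$ equals the number of occurrences of the entry $a$ in column $c$ of $T_X$, and likewise for $y_{cb}$ and column $c$ of $T_Y$. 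Thus the initial monomial determines, for each column of each tableau, the multiset of its entries.

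Second, a semistandard tableau is determined by the multisets of its column entries. Each column is weakly increasing from top to bottom, so its entries are the multiset sorted in weakly increasing order. The shape is also determined, since the column lengths are the sizes of these multisets. The one point needing care is the convention of aligning bottom rows in the display \eqref{eq:tableaux-correspondence}: the two tableaux are combined with their bottom rows matched, so we must check that the pair, and not just each tableau separately, determines the standard monomial. Recall that rows of $T_X$ and $T_Y$ coming from the same $XY$-minor have equal length. The maximal $X$-minors (respectively $Y$-minors) sit at the bottom of the chain. They therefore account for the rows of length exactly $t$ in $T_X$ beyond those of $T_Y$, and the chain cannot contain both kinds of maximal minor. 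Consequently, at most one of the two tableaux has extra rows, all of length $t$, and the remaining rows pair off bottom-to-top with equal lengths. So the pair of tableaux reconstructs the chain, and hence the standard monomial.

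The main obstacle I expect is the claim that the data are consistent in this way: that the row-length pairing between $T_X$ and $T_Y$ is forced, and that semistandardness holds across the boundary between $X$-minors and $XY$-minors. Both are asserted just before the lemma from the definition of the order, so I would cite that discussion, and verify only that the bijection between chains and tableau pairs is well defined, via the bottom-aligned reading.
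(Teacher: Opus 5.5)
Your proposal is correct and follows the paper's proof. In both, the exponents of the $x_{ij}$ and $y_{jk}$ in the initial monomial give the column multisets of $T_X$ and $T_Y$, hence their shapes and, after sorting, their entries; the pair of tableaux then determines the standard monomial. Your leading-term argument for linear independence and your check of the bottom-aligned row pairing only spell out steps that the paper states briefly or leaves to the discussion before the lemma.
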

  \begin{proof}
   It suffices to show that the initial monomial of a standard monomial determines its pair of tableaux uniquely. 
   Let $M$ be a standard monomial with associated tableaux $(T_X, T_Y)$, 
   and define 
   \begin{equation*} 
    d^X_{ij} \colonequals \deg_{x_{ij}}(\init(M)) 
    \andd 
    d^Y_{jk} \colonequals \deg_{y_{jk}}(\init(M)).
   \end{equation*}
   We make the following observations:
   \begin{enumerate}
    \item The respective number of boxes in the $j$-th column of $T_X$ and $T_{Y}$ are given by
    \begin{equation*} 
        \sum_{i=1}^{m} d^X_{ij}
        \andd
        \sum_{k=1}^{n} d^Y_{jk}.
    \end{equation*}
      \item The respective multisets of entries in the $j$-th column of $T_{X}$ and $T_{Y}$ are given by
      \begin{equation*} 
        \bigcup_{i=1}^{m}\{i\}^{d^X_{ij}} 
        \andd
        \bigcup_{k=1}^{n}\{k\}^{d^Y_{jk}},
      \end{equation*}
    where $\{i\}^{d}$ denotes the multiset consisting of $d$ copies of
    $i$.
   \end{enumerate}
   Every multiset has a unique weakly increasing arrangement. 
   As the columns of $T_X$ and $T_Y$ are weakly increasing, the preceding data recovers $T_X$ and $T_Y$ uniquely, and, in turn, determines $M$. 
   Distinct standard monomials have distinct initial monomials, and are therefore $K$-linearly independent.
  \end{proof}

  We may now obtain the desired dimension bound.

  \begin{theorem} \label{thm:dim-lower-bound}
    Let $K$ be any field and $m$, $n$, $t$ positive integers with $t\le\max(m,n)$. For
    \begin{equation*}
      R=K[\Delta_t(X),\Delta_t(Y),XY]
      \subseteq K[X_{m\times t},Y_{t\times n}],
    \end{equation*}
    one has
    \begin{equation*}
      \dim R \ge mt+nt-t^2+1.
    \end{equation*}
  \end{theorem}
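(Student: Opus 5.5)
The plan is to bound $\dim R$ from below by producing many $K$-linearly independent elements of bounded degree in $R$, and then comparing growth rates. Let $\ell \colonequals mt+nt-t^2+1$ be the rank of $B$ from Lemma~\ref{lem:poset_rank}. Fix a maximal chain $c_1<c_2<\cdots<c_\ell$ in $B$; its length is $\ell$. Every element of $B$ is a minor of $X$, of $Y$, or of $XY$, and each of these lies in $R$. The minors of $XY$ lie in $R$ because they are polynomials in the entries of $XY$. Hence every standard monomial supported on this chain lies in $R$. These monomials are exactly the products $c_1^{e_1}\cdots c_\ell^{e_\ell}$ with $e_i\ge 0$, since any multiset of elements of a chain is again a chain (repetition allowed). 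The map $(e_1,\dots,e_\ell)\mapsto c_1^{e_1}\cdots c_\ell^{e_\ell}$ is injective on exponent vectors, because distinct exponent vectors give distinct standard monomials. By Lemma~\ref{lem:standard-independent}, these monomials are therefore $K$-linearly independent in $S$, hence in $R$.

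The cleanest way to conclude is to deduce that $c_1,\dots,c_\ell$ are algebraically independent over $K$. Any polynomial relation $P(c_1,\dots,c_\ell)=0$ expands into a $K$-linear combination of the distinct monomials $c_1^{e_1}\cdots c_\ell^{e_\ell}$, which are linearly independent, so every coefficient of $P$ vanishes. Thus $K[c_1,\dots,c_\ell]\subseteq R$ is a polynomial ring in $\ell$ variables. Since $R$ is a finitely generated domain (a subring of $S$), this gives
\[
\dim R=\trdeg_K \Frac(R)\ge \ell .
\]
Alternatively, one could count standard monomials of bounded degree and compare Hilbert functions, but the algebraic independence argument avoids any grading bookkeeping.

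The only genuine point to verify is the injectivity of exponent vectors into the set of standard monomials, together with the fact that Lemma~\ref{lem:standard-independent} applies to them. A monomial supported on the chain determines its pair of tableaux $(T_X,T_Y)$. From the tableaux one reads off the multiset of chain elements used, by reading rows upward and, for $XY$-minors, pairing rows of $T_X$ with the corresponding rows of $T_Y$. So distinct exponent vectors give distinct tableau pairs, hence distinct initial monomials.

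One subtlety I would check carefully is the reconstruction from the tableaux when a chain passes from maximal minors of $X$ to minors of $XY$ of size $t$. The issue is that a row of length $t$ in $T_X$ could come either from a minor of $X$ or from a $t$-minor of $XY$. The matching row in $T_Y$ disambiguates: an $XY$-minor contributes a $Y$-row, while an $X$-minor does not. Because the tableaux are aligned at the bottom, the count of $Y$-rows fixes how many of the top $X$-rows are paired. This is the step I expect to require the most care. It is, however, already implicit in the bijection asserted before Lemma~\ref{lem:standard-independent}, so I would simply invoke that correspondence.
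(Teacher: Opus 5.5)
Your proposal is correct and is essentially the paper's proof. You take a chain of $d=mt+nt-t^2+1$ elements from Lemma~\ref{lem:poset_rank} and note that every product $c_1^{e_1}\cdots c_d^{e_d}$ is a standard monomial, hence these products are $K$-linearly independent by Lemma~\ref{lem:standard-independent}; this is exactly algebraic independence of the $c_i$, giving $\trdeg_K \Frac(R)\ge d$. Your worry about disambiguating rows of length $t$ is already handled inside that lemma; note only that the rows of $T_X$ paired with rows of $T_Y$ are the bottom ones, not the top ones, which does not affect the argument.
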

  \begin{proof}
    By Lemma~\ref{lem:poset_rank}, there exists a chain $f_{1}, \ldots, f_{d} \in B$ with $d=mt+nt-t^2+1$ elements. 
    By construction, elements of the form
    \begin{equation*}
        f_1^{e_1} \cdots f_d^{e_d},
        \qquad e_1,\ldots,e_d\ge0,
    \end{equation*}
    are standard monomials, and thus, $K$-linearly independent by
    Lemma~\ref{lem:standard-independent}; 
    this is precisely the statement that $f_{1}, \ldots, f_{d}$ are algebraically independent.
    Thus, we obtain the desired bound $\dim R = \trdeg_{K} \Frac(R) \ge d$. 
  \end{proof}

  \begin{remark}
    In light of Theorem~\ref{thm:dim-aux-ring}, we obtain that $\dim(R)=\rank(B)$. This might lead one to expect that $R$ is an ASL on $(B,<)$.
        
        For $t>\min(m,n)$, the ring $R$ is indeed an ASL: the straightening relations are stated in \cite[Theorem~5.1.1]{LakshmibaiShukla}. 
    However, for $t \le \min(m,n)$, this may no longer be true. 
    This can be readily observed in Example~\ref{eg:poset}: 
    the elements $\minor{1,2}{}_{Y}$ and $\minor{1,2}{}_{X}$ are incomparable but the product $\minor{1,2}{}_{Y} \minor{1,2}{}_{X}$ cannot be straightened as $B$ has no element smaller than $\minor{1,2}{}_{Y}$ and $\minor{1,2}{}_{X}$.
    Reversing the poset order does not yield an ASL either, as 
    \begin{equation*} 
      \minor{1}{2}\minor{2}{1} \;=\; \minor{1}{1}\minor{2}{2}-\minor{1,2}{1,2}.
      \qedhere
    \end{equation*}
  \end{remark}

    The \emph{rank} of an element $\mu$ in a poset $H$ is the length of the longest chain with maximal element $\mu$. When $R$ is an ASL, the following proposition allows us to construct a minimal set-theoretic generating set of the nullcone ideal of the $\SL_t$-action considered in Section~\ref{sec:SL}.

    \begin{prop}[{\cite[Lemma~5.9]{BrunsVetter}}]\label{prop:SOPofASL}
        Let $H$ be a finite poset and $A=K[H]$ be an ASL\@. Let $x_i=\sum_{\text{rank}(\mu)=i}\mu$ for all $1\le i\le \rank(H)$. Then, $\rad (x_1,\dots,x_{rank(H)}) = \rad (H)$. \qed
    \end{prop}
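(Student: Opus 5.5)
The plan is to prove the two inclusions separately. One is trivial: every $x_{i}$ is a sum of elements of $H$, so $(x_{1}, \ldots, x_{\rank(H)}) \subseteq (H)$, and hence $\rad(x_{1}, \ldots, x_{\rank(H)}) \subseteq \rad(H)$. For the reverse inclusion it suffices to show that every $\mu \in H$ lies in $\rad(x_{1}, \ldots, x_{\rank(H)})$. Write $J_{i} \colonequals \rad(x_{1}, \ldots, x_{i})$. I would prove by induction on $i$ the following claim: every element of $H$ of rank at most $i$ lies in $J_{i}$. Taking $i = \rank(H)$ then finishes the proof.

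Two elementary facts about ranks are needed first. The first is that if $\nu < \mu$ in $H$, then $\operatorname{rank}(\nu) < \operatorname{rank}(\mu)$: any chain ending at $\nu$ extends by $\mu$ to a longer chain ending at $\mu$. The second follows from the first: two distinct elements of the same rank are incomparable.

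Now fix $\mu$ of rank $i$, and assume the claim for all smaller ranks; for $i = 1$ that assumption is vacuous. Expand
\begin{equation*}
  \mu x_{i} = \mu^{2} + \sum_{\nu \neq \mu,\ \operatorname{rank}(\nu) = i} \mu \nu .
\end{equation*}
Each $\nu$ in the sum is incomparable to $\mu$. By the straightening axiom of an ASL, the product $\mu\nu$ is a $K$-linear combination of standard monomials $\xi_{1} \cdots \xi_{k}$ with $\xi_{1} \le \xi_{2} \le \cdots$ and $\xi_{1} \le \mu$, $\xi_{1} \le \nu$. Since $\mu$ and $\nu$ are incomparable, $\xi_{1} \neq \mu$, so $\xi_{1} < \mu$. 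By the first fact, $\xi_{1}$ has rank less than $i$, so the induction hypothesis gives $\xi_{1} \in J_{i-1}$. Hence each $\mu\nu \in J_{i-1}$. It follows that $\mu^{2} \in (x_{i}) + J_{i-1} \subseteq J_{i}$, and therefore $\mu \in J_{i}$. Elements of rank less than $i$ already lie in $J_{i-1} \subseteq J_{i}$, which completes the induction.

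The main point to check is the straightening step. I need that the ASL axiom indeed produces a first factor $\xi_{1}$ bounded by both $\mu$ and $\nu$, and that the constant term and the empty monomial cannot occur. Both follow from the graded ASL axioms, since every element of $H$ has positive degree. It is incomparability that makes $\xi_{1}$ strictly smaller than $\mu$, and strictness is exactly what makes the rank drop.
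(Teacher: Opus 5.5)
Your proof is correct. The paper gives no argument of its own and only cites Bruns--Vetter; your induction on rank is essentially the standard proof given there: distinct elements of equal rank are incomparable, so straightening puts each $\mu\nu$ in the ideal generated by elements of smaller rank, which gives $\mu^{2} \in (x_{i}) + \rad(x_{1}, \ldots, x_{i-1})$.
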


    \begin{corollary}\label{cor:STgens}
        Let $K$ be a field of characteristic other than two and $m$, $n$, $t$ be positive integers with $\min(m,n)<t\le\max(m,n)$. Then,
        \[
        \Lambda = \left\{ \sum_{\substack{\operatorname{rank}(\mu)=i}} \mu \;:\; i=1,\ldots,mt+nt-t^2+1 \right\}
        \]
        is a minimal set-theoretic generating set of the ideal $\frakU=I_t(X)+I_t(Y)+I_1(XY)$ in the polynomial ring $K[X_{m \times t},Y_{t \times n}]$. 
    \end{corollary}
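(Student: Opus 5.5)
Since $\min(m,n)<t\le\max(m,n)$, the remark after Theorem~\ref{thm:dim-lower-bound} tells us that $R=K[\Delta_t(X),\Delta_t(Y),XY]$ is an ASL on the poset $(B,<)$ from the appendix; the straightening relations are given in \cite[Theorem~5.1.1]{LakshmibaiShukla}. Without loss of generality take $n<t\le m$. Then $\Delta_t(Y)=\emptyset$, and $B$ consists of the maximal minors of $X$ together with the minors of $XY$ of size at most $n$. We will apply Proposition~\ref{prop:SOPofASL} with $H=B$ and $A=R$. It gives
\begin{equation*}
\rad(x_1,\dots,x_{\rank B})R=\rad(B R).
\end{equation*}
Here $BR$ is the homogeneous maximal ideal $\frakm$ of $R$, because $B$ contains all algebra generators of $R$, namely $\Delta_t(X)$ and the $1$-minors of $XY$. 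By Lemma~\ref{lem:poset_rank}, $\rank B=mt+nt-t^2+1$. So $\Lambda$ has exactly this many elements, each a homogeneous element of $R$.

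Next we extend to $S$. The equality of radicals in $R$ means every element of $\frakm$ has a power lying in $(\Lambda)R$. Extending to $S$, the generators of $\frakU=\frakm S$ then have powers in $(\Lambda)S$. Hence $\rad(\Lambda S)=\rad\frakU$, because $\Lambda\subseteq\frakm$ gives the reverse inclusion. This is the same argument as in Remark~\ref{rem:ara-bound-dim}. One point needs checking: Proposition~\ref{prop:SOPofASL} is stated for ASLs over a field, and the grading and ASL axioms of \cite{LakshmibaiShukla} must apply to the poset order used in the appendix, not the reversed one. I will verify that the order in \cite{LakshmibaiShukla} matches $B$ as defined, or is its opposite, with the rank function adjusted accordingly. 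Since the $x_i$ are sums over all elements of a fixed rank, reversing the order only permutes $\Lambda$ once rank is measured from the other end. So either convention yields the same set, and I expect this to be harmless.

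For minimality we use Theorem~\ref{thm:ara-SL}, which gives $\ara\frakU=(m+n-t)t+1=|\Lambda|$ when $\chr K\neq 2$. A set-theoretic generating set of cardinality $\ara\frakU$ cannot be shrunk, so $\Lambda$ is minimal. This is where the hypothesis $\chr K\neq2$ enters.

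The main obstacle is confirming that the ASL structure really holds on $B$ with the stated order. In particular, the straightening relations for products of incomparable elements of the form $\minor{\cdots}{}_X\cdot\minor{\cdots}{\cdots}$ must produce standard monomials whose factors are smaller than both. Combined with Lemma~\ref{lem:standard-independent} and Theorem~\ref{thm:dim-aux-ring}, the standard monomials are linearly independent, and $\dim R=\rank B$ is consistent with the ASL property. So we need only the generation part, which I will take from \cite{LakshmibaiShukla}.
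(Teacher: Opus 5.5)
Your proposal is correct and is essentially the paper's own argument. You apply Proposition~\ref{prop:SOPofASL} to the ASL structure of $R$ on $(B,<)$ (from the remark citing \cite{LakshmibaiShukla}) to get $\rad(\Lambda R)=\rad\frakm$, extend this to $S$, and conclude minimality from $|\Lambda|=\rank B=\ara\frakU$ via Lemma~\ref{lem:poset_rank} and Theorem~\ref{thm:ara-SL}. Your side discussion about a possibly reversed order convention is unnecessary, since the paper takes the ASL structure to be on $(B,<)$ itself.
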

    \begin{proof}
        Let $R\colonequals K[\Delta_t(X),\Delta_t(Y),XY]$. By Proposition~\ref{prop:SOPofASL}, $\Lambda$ generates the ideal $\frakU \cap R$ up to radical in $R$, and hence, the ideal $\frakU$ up to radical in $K[X,Y]$. As $\ara(\frakU)=mt+nt-t^2+1$ by Theorem~\ref{thm:ara-SL}, we conclude that $\Lambda$ is a minimal set-theoretic generating set of $\frakU$.
    \end{proof}

    \begin{remark}
        In the setting of Corollary~\ref{cor:STgens}, the ASL structure also enables us to construct a homogeneous minimal set-theoretic generating set of the ideal $\frakU$. For $i=1,\ldots,\ara(\frakU)$, let $d_i$ be the least common multiple of $\deg(\mu)$ for all elements $\mu$ of rank $i$, and for such a $\mu$, set $\alpha(\mu) \colonequals d_i/\deg(\mu)$. Then, the set
        \[
        \left\{ \sum_{\substack{\operatorname{rank}(\mu)=i}} \mu^{\alpha(\mu)} \;:\; i=1,\ldots,mt+nt-t^2+1 \right\}
        \] is homogeneous and generates the ideal $\frakU$ up to radical.
    \end{remark}

\printbibliography

\end{document}